\def\bP{\mathbf{P}}
\def\bI{\mathbf{I}}
\def\bC{\mathbf{C}}
\def\bV{\mathbf{V}}
\def\cO{\mathcal{O}}
\def\cK{\mathcal{K}}
\def\cX{\mathcal{X}}
\def\c{\mathcal}
\def\Hom{\mathrm{Hom}}
\def\Syz{\mathrm{Syz}}
\def\Sym{\mathrm{Sym}}
\def\Im{\mathrm{Im}}
\def\Span{\mathrm{Span}}
\newtheorem{thm}{Theorem}[section]
\newtheorem{prop}[thm]{Proposition}
\newtheorem{lemma}[thm]{Lemma}
\newtheorem{definition}[thm]{Definition}
\newtheorem{rem}[thm]{Remark}
\newtheorem{qn}[thm]{Question}
\newcommand{\be}{\begin{equation}}
	\newcommand{\ee}{\end{equation}}
\title{The second syzygy schemes of curves of large degree}
\date{\today}
\subjclass[2000]{14H51; 13D02}
\keywords{Projective curves, Brill-Noether theory, syzygies, syzygy schemes}
\author{Marian Aprodu}
\address{University of Bucharest, Department of Mathematics \& Institute
	of Mathematics “Simion Stoilow” of the Romanian Academy, P.O. Box 1-764, RO 014700, Bucharest, Romania}
\email{marian.aprodu@fmi.unibuc.ro, marian.aprodu@imar.ro}
\thanks{MA was partly supported by the PNRR grant
	CF 44/14.11.2022 \emph{Cohomological Hall algebras of smooth surfaces and
		applications}}
\author{Andrea Bruno}
\address{Dipartimento di Matematica e Fisica, Universit\`a Roma Tre 
	\\ L.go S.L. Murialdo, 1 -  00146 Roma, Italia}
\email{andrea.bruno@uniroma3.it}
\thanks{AB was partly supported by PRIN2020-2020KKWT53 and PRIN2022-2022L34E7W}
\author{Edoardo Sernesi}
\address{Dipartimento di Matematica e Fisica, Universit\`a Roma Tre 
	\\ L.go S.L. Murialdo, 1 -  00146 Roma, Italia}
\email{sernesi@gmail.com}
\begin{document}
	
	\maketitle

	\begin{abstract}
		The present paper is a natural continuation of the previous work \cite{ABS19} where we studied the second syzygy scheme of canonical curves. We find sufficient conditions ensuring that the second syzygy scheme of a genus--$g$ curve of degree at least $2g+2$ coincide with the curve. If the property $(N_2)$ is satisfied, the equality is ensured by a more general fact emphasized in \cite{ABS19}. If $(N_2)$ fails, then the analysis uses the known case of canonical curves.
	\end{abstract}
	
	\section{Introduction}
	
	Let $V$ be an $(N+1)$--dimensional complex vector space and fix a basis $x_0,\ldots,x_N$ in $V$, and consider $p\ge 1$. M. Green \cite{Gr82} associates to any non--zero element 
	\[
	\wedge^pV\otimes V\ni \gamma=\sum _{i_1<\ldots<i_p}(x_{i_1}\wedge\cdots\wedge x_{i_p})\otimes\ell_{i_1\ldots i_p}
	\]
	a number of quadratic forms in the following natural way. The image $\delta(\gamma)$ of $\gamma$ through the Koszul differential $\delta:\wedge^pV\otimes V\to \wedge^{p-1}V\otimes \Sym^2V$ is en element of type
	\[
	\delta(\gamma)=\sum_{j_1<\ldots<j_{p-1}} x_{j_1}\wedge\ldots\wedge x_{j_{p-1}}\otimes Q_{j_1\ldots j_{p-1}} 
	\]
	with the property that
	\[
	\sum_{j_1<\ldots<j_{p-1}}\sum_{k}(-1)^k(x_{j_1}\wedge\cdots\wedge \widehat{x}_{j_k}\wedge\cdots\wedge x_{j_{p-1}})\otimes (x_{j_k}\cdot Q_{j_1\ldots j_{p-1}})=0,
	\]
	and the quadratic forms in question are precisely $(Q_{j_1\ldots j_{p-1}})_{j_1<\ldots<j_{p-1}}$.
	
	\medskip
	
	The linear space spanned by them in $\Sym^2V$ is called the space of quadratic forms \emph{involved in $\gamma$}. Furthermore, these quadratic forms generate a homogeneous ideal  $\bI(\gamma)$ of $\bC[x_0,\ldots,x_N]$ called the \emph{syzygy ideal of $\gamma$}, and the scheme $\Syz(\gamma)$ defined by this ideal is the \emph{syzygy scheme of $\gamma$}. If these quadratic forms all vanish on a given projective scheme $X\subset\bP^N$ i.e. $\bI(\gamma)\subset I_X$, we say that $\gamma$ is a \emph{linear syzygy of $X$}, \cite{mG84}. Note that $\Syz(\gamma)$ is the largest scheme for which $\gamma$ is a linear syzygy. For the fixed scheme $X$, the \emph{p--th syzygy scheme} $\Syz_p(X)$ is ideal--theoretically defined by all the quadrics involved in syzygies of $X$ (see \cite{Gr82}, \cite{Ehb}, \cite{vB} etc.).
	
	\medskip
	
	If larger than the given scheme $X$, syzygy schemes tend to have small degrees, and hence they make an impact on the geometry of $X$, provided that the space of syzygies is non--trivial, of course, see \cite{mG84}, \cite{vB}, \cite{Ehb}, \cite{AN} etc. It is thus natural to ask whether we can find explicit sufficient conditions ensuring that the a given syzygy scheme actually coincides with $X$. We focus on the case $p=2$, which presents several advantages, the first and foremost being the simple form of the families of quadrics involved in syzygies. More specifically, for a given linear syzygy $\gamma$, the element $\delta(\gamma)$ is of the form $\delta(\gamma)=\sum_{j=0}^N x_j\otimes Q_j$ and has the property that $\sum_{j=0}^N x_j\cdot Q_j=0$.
	
	\medskip
	
	A first result in this direction is Proposition 2 in \cite{ABS19}: if $X$ satisfies $(N_2)$, then $\Syz_2(X)=X$, see also Proposition \ref{prop:N2} below. On the other hand, if $(N_1)$ fails, then $\Syz_2(X)\ne X$, by definition. Hence, the non--trivial cases to be analyzed are the cases when $X$ satisfies $(N_1)$ i.e. it is projectively normal and the ideal is generated by quadratic forms. In \cite{ABS19} we analyzed this problem for (non--hyperelliptic) canonical curves. The main result in loc.cit. shows that the second syzygy scheme of a canonical curve coincides with the curve, unless it is bielliptic, trigonal, or is contained in a del Pezzo surface.
	
	\medskip
	
	In this paper, we continue the study of the second syzygy schemes. In Section \ref{sec:prelim} we recall all the basic notions and we give criteria for the equality $\Syz_2(X)=X$, Lemma \ref{lem:Syz2(X)=X}, Lemma \ref{lem:Syz2-YZ}. In the opposite direction, we also give a criterion for the non--equality $\Syz_2(X)\ne X$, Lemma \ref{lem:Syz2-notX}. Finally, in this section we prove a semi--continuity result for the syzygy schemes, see Theorem \ref{thm:semicontinuity}.
	Having established the basics, we focus on the case of curves of large degree. The main result is the following.
	
	\begin{thm}
		Let $C$ be a smooth projective curve of genus $\ge 11$ embedded by a line bundle $L$ of degree $d\ge 2g+2$. Suppose that either: 
		\begin{itemize}
			\item[(i)] $d\ge 2g+3$, or 
			\item[(ii)] $d=2g+2$ and there are no 4--secant 2--planes to $C$, or 
			\item[(iii)] $d=2g+2$, $L$ embeds $C$ with a 4--secant 2--plane i.e. $L=K_C+D$ with $D\in C_4$,  $\mathrm{Cliff}(C)\ge 2$ and $C$ is not bielliptic, or
			\item[(iv)] $d=2g+2$, $C$ is bielliptic and $L=K_C+D$ with $D\in C_4$ general. 
		\end{itemize}
		Then $\Syz_2(C)=C$.
	\end{thm}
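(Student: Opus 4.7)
The plan is to split the four cases according to whether the property $(N_2)$ holds.

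For cases (i) and (ii), the idea is to verify $(N_2)$ and then invoke Proposition \ref{prop:N2}. In case (i), $d\geq 2g+3$ is the classical range in which Green's theorem on syzygies of line bundles of large degree guarantees $(N_2)$. In case (ii), the failure of $(N_2)$ for a non--special line bundle of degree $2g+2$ is known to be detected by the existence of a 4--secant 2--plane: by geometric Riemann--Roch, four points $p_1+\cdots+p_4$ of $C$ span a 2--plane in $\bP^{g+2}=\bP(H^0(L)^*)$ if and only if $L=K_C+p_1+\cdots+p_4$, and the hypothesis of (ii) rules this out. Both cases therefore reduce to $\Syz_2(C)=C$ immediately.

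Cases (iii) and (iv) require a genuinely different argument since $(N_2)$ fails. Writing $L=K_C+D$ with $D\in C_4$ and setting $\Pi=\langle D\rangle\subset\bP^{g+2}$, the inclusion $H^0(K_C)\subset H^0(L)$ of sections vanishing on $D$ realises the canonical embedding $C_K\subset\bP^{g-1}$ as the linear projection of $C_L$ from $\Pi$. The plan is to apply the main theorem of \cite{ABS19} to $C_K$ and then transfer the conclusion to $C_L$ using one of the criteria of Section \ref{sec:prelim}, most likely Lemma \ref{lem:Syz2-YZ}. The hypothesis $g\geq 11$ is used to exclude the possibility that $C_K$ lies on a del Pezzo surface, since such surfaces have degree at most $9$ and the canonical curves they carry have arithmetic genus at most $10$; the hypothesis $\mathrm{Cliff}(C)\geq 2$ excludes hyperelliptic, trigonal, and smooth plane--quintic curves.

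In case (iii), $C$ is also non--bielliptic, so by \cite{ABS19} we have $\Syz_2(C_K)=C_K$. One then checks that any quadric involved in a linear syzygy of $C_L$ either projects to a quadric in the ideal of $C_K$, or lies in the ideal of $\Pi$; since $C_L\cap\Pi=D$ is a finite set of four points, combining these two possibilities forces $\Syz_2(C_L)\subset C_L$.

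The main obstacle is case (iv). For bielliptic $C$, \cite{ABS19} yields $\Syz_2(C_K)$ strictly larger than $C_K$, containing an additional scroll--type component coming from the bielliptic involution $\sigma\colon C\to E$, so the naive transfer used in (iii) fails. The plan here is to exploit the generality of $D\in C_4$: a general $D$ is not the pullback of a divisor from $E$, so $\Pi$ sits in general position with respect to the extra component of $\Syz_2(C_K)$. Combined with the semi--continuity Theorem \ref{thm:semicontinuity}, this should imply that the extra component does not lift through the projection to a component of $\Syz_2(C_L)$. Formulating this non--lifting precisely, in terms of Koszul cohomology and the ideal--theoretic definition of $\Syz_2$, is the technical heart of the argument.
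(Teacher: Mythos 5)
Your treatment of cases (i) and (ii) matches the paper: both follow from Green--Lazarsfeld (Theorem \ref{T:GL}) plus Proposition \ref{prop:N2}, and your geometric Riemann--Roch observation that a 4--secant 2--plane exists iff $L-K_C$ is effective is exactly how the paper phrases Proposition \ref{prop:non4-secant}. You have also correctly identified the high-level strategy for (iii) and (iv): project from $\Pi=\langle D\rangle$ to the canonical model, invoke \cite{ABS19}, and transfer back via Lemma \ref{lem:Syz2-YZ} and Theorem \ref{thm:semicontinuity}. However, the substance of cases (iii) and (iv) --- which is the technical core of the paper --- is missing, and the sketches you offer in their place do not work as stated.

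In case (iii) there are two concrete gaps. First, Lemma \ref{lem:Syz2-YZ} needs a chain of varieties in the \emph{same} ambient space $\bP^{g+2}$, together with the surjectivity of $K_{2,1}(Z)\otimes V^\vee\to I_{Z,2}$ for the larger variety $Z$ (not merely $\Syz_2(Z)=Z$, and not a statement about $C_K\subset\bP^{g-1}$). The paper supplies this by introducing $X=C\cup\Pi$ and the cone $T$ over $\varphi_K(C)$ with vertex $\Pi$, so that $C\subset X\subset T$, and by proving that $X$ satisfies $(N_1)$; the surjectivity for $T$ comes from Theorem \ref{thm:ABS} (or from $(N_2)$ when $\mathrm{Cliff}(C)\ge 3$). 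Without these intermediate objects your plan cannot be executed. Second, your closing step has the logic reversed: since $\Syz_2(C_L)$ is cut out by the quadrics that \emph{are} involved in syzygies, constraining which quadrics can be involved only makes $\Syz_2(C_L)$ larger; to force $\Syz_2(C_L)\subseteq C_L$ you must \emph{exhibit} a linear syzygy involving each quadric in a generating set of $I_{C,2}$. This is what the paper does explicitly, by identifying $H^0(\c I_{X/T}(2))$ with $\ker(p)\subset H^0(K_C)\otimes W$ and writing down Koszul-type relations $q\otimes\ell'-q'\otimes\ell$ and $q\otimes\ell-\sum q_i\otimes\ell_i$ in $\ker(\mu_{X/T})$, then lifting them to $\ker(\mu_X)$ using Petri's theorem. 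In case (iv) you have inverted the role of semicontinuity: Theorem \ref{thm:semicontinuity} says the surjectivity locus is open, so the correct move is to \emph{specialize} $D$ to one convenient divisor --- the paper takes $D\in|M|$ with $M$ the tetragonal pencil composed with the bielliptic involution, and then computes $\pi_*(\c I_{C/S}(2H))$ on the associated scroll and applies the rolling-factors trick --- whereas your plan requires a direct general-position argument for general $D$, which you acknowledge you have not formulated and which semicontinuity does not supply.
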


	The first two cases (i) and (ii) are covered by Proposition \ref{prop:N2}, and are discussed in Section~\ref{sec:non4-secant}. Sections 4 and 5 are devoted to the proof of the case (iii) and represent the technical core of the paper. In Section 4, we reduce the case to the analysis of the second syzygy variety for the variety $X$ that has two irreducible components, namely, the curve $C$ and the 4--secant 2--plane, see Figure \ref{fig:X}. In Section 5, we prove that this reducible variety $X$ satisfies the hypotheses of Lemma \ref{lem:Syz2(X)=X}. We conclude the paper with the case (iv), see Section \ref{sec:excepted}. In the bielliptic case, we use semicontinuity, and specialize to the case when $|D|$ is a $g^1_4$.
	
	\section{Preliminaries}
	\label{sec:prelim}
	
	\subsection{Syzygy schemes}
	
	We start from the general definition of \emph{syzygy schemes}, see \cite{Gr82}, \cite{mG84}, \cite{Ehb}, \cite{vB}, \cite{AN}, \cite{Se}, \cite{ES}. For a projective variety $X$, we denote by $K_{p,q}(X)$ the Koszul cohomology groups of its homogeneous coordinate ring, \cite{mG84}. We refer to \cite{Gr82}, \cite{mG84}, \cite{AN} for some notation, for definitions and basic properties of Koszul cohomology and syzygies. 
	
	\medskip
	
	We fix $V$ a (complex) vector space of dimension $(N+1)$, with $N\ge 3$. For any integer $p$ we consider the Koszul differential (cf. \cite{mG84})
	\[
	\delta_{p,q}:\bigwedge^pV\otimes\Sym^qV\to \bigwedge^{p-1}V\otimes\Sym^{q+1}V
	\]
	given by
	\[
	\delta_{p,q}((v_1\wedge\cdots\wedge v_p)\otimes P=\sum_k(-1)^k(v_i\wedge\cdots\wedge \widehat{v}_k\wedge\cdots\wedge v_p)\otimes (v_kP)
	\]
	for any $v_1\wedge\cdots\wedge v_p\ne 0$.
	
	To ease notation, we will drop the indices $p,q$ whenever they are clear from the context.
	
	\begin{definition}[see \cite{Gr82}, \cite{Ehb}, \cite{vB}, \cite{AN}]
		Let $p\ge 1$ be an integer, and $\gamma\in \wedge^pV\otimes V$. Identify $\delta(\gamma)\in \wedge^{p-1}V\otimes \Sym^2V$ with the linear map $\overline{\gamma}\in\Hom(\wedge^{p-1}V^\vee,\Sym^2V)$. 
		\begin{itemize}
			\item[(i)] 	The \emph{ideal of $\gamma$} is the homogeneous ideal $\mathbf{I}(\gamma)$ of $\Sym(V)$ generated by the image of $\overline{\gamma}$.
			
			\item[(ii)] The \emph{syzygy scheme of $\gamma$} is the projective scheme $\Syz(\gamma)$ in $\mathbf{P}V^\vee$ defined by the ideal $\mathbf{I}(\gamma)$.    		
		\end{itemize}
	\end{definition}
	
	We will recurrently use the following terminology, in connection with the analysis of specific sets of quadrics. 
	
	\begin{definition}
		\begin{itemize}
			\item[(i)] A quadratic form $Q\in\Sym^2V$ is said to be \emph{involved in $\gamma$} if $Q$ belongs to the image of $\overline{\gamma}$ in $\Sym^2V$. In particular, the ideal of $\gamma$ is generated by the quadrics involved in $\gamma$.
			
			\item[(ii)] Let $Q_1,\ldots,Q_r\in \Sym^2V$ be a set of non--zero quadratic forms. We say that they are \emph{linearly related} if there exists $\gamma\in\wedge^2V\otimes V$ such that $Q_1,\ldots,Q_r\in\Im(\overline{\gamma})$, equivalently there exist quadratic forms $Q_{r+1},\ldots,Q_s$ and non--zero linear forms  $\ell_1,\ldots,\ell_s$ such that $\sum_{i=1}^s\ell_i\otimes Q_i\ne 0$ in $V\otimes \Sym^2V$ and $\sum_{i=1}^s\ell_i\cdot Q_i = 0$ in $\Sym^3V$.
		\end{itemize}
	\end{definition}

	Note that, by the properties of the Koszul complex the definition above only depends on $\delta(\gamma)$ and hence on the class $[\gamma]$ of $\gamma$ in the quotient space $\wedge^pV\otimes V/ \wedge^{p+1}V$. If $\delta(\gamma)=0$, then obviously $\Syz(\gamma)=\mathbf{P}V^\vee$, and in general, the syzygy scheme $\Syz(\gamma)$ is the largest scheme in $\mathbf{P}V^\vee$ for which $\gamma$ is a syzygy, i.e. $\gamma\in K_{p,1}(\Syz(\gamma))$.
	
	\medskip
	
	It is sometimes convenient in the definition above to pass to the projectivization, and to this end we define the linear subspace 
	\[
	\mathbf{P}(\gamma):=\mathbf{P}(\Im(\overline{\gamma}))\subset\mathbf{P}(\Sym^2V).
	\] 
	It parameterizes all the quadrics in $\mathbf{P}V^\vee$ that contain $\Syz(\gamma)$. 
	
	\medskip
	
	We now pass to a more specific situation, where the syzygy schemes are related to a given scheme.
	
	\begin{definition}[see \cite{Gr82}, \cite{Ehb}, \cite{vB}, \cite{AN}]
		Let $X\subset\mathbf{P}V^\vee$ be a non-degenerate scheme. \emph{The $p$--th syzygy scheme of $X$} is the scheme--theoretic intersection
		\[
		\Syz_p(X):=\bigcap_{0\ne \gamma\in K_{p,1}(X)}\Syz(\gamma).
		\]
	\end{definition}

	A better way to view a syzygy scheme is the following. Note that we have an isomorphism
	\[
	K_{p,1}(X)\cong\ker\left\{\bigwedge^{p-1}V\otimes I_{X,2}\to \bigwedge^{p-2}V\otimes I_{X,3}\right\}
	\]
	via $\gamma\mapsto \delta(\gamma)$. The inclusion of $K_{p,1}(X)$ into $\bigwedge^{p-1}V\otimes I_{X,2}$ yields to a natural map
	\begin{equation}
		\label{eqn:phi}
		\phi:K_{p,1}(X)\otimes \bigwedge^{p-1}V^\vee\to I_{X,2},
	\end{equation}
	and then
	\[
	\Syz_p(X)=\bV(\Im(\phi)),
	\]
	since
	\[
	\Im(\phi)=\sum_{0\ne\gamma\in K_{2,1}(X)} \Im(\overline{\gamma}).
	\]
	
	We call the ideal generated by $\Im(\phi)$ above, the \emph{$p$--the syzygy ideal of $X$}.
	
	\medskip
	
	In some of the previous works on this topic, the syzygy schemes were considered automatically to be endowed with the reduced structure, whence the terminology \emph{syzygy varieties}. In the present paper, we will show that the syzygy schemes that we analyze are automatically reduced (and hence they are genuine varieties) due to some specific reasons related to the particular setup. We also notice that the scheme structure of the syzygy schemes that appear in our paper \cite{ABS19} are reduced, for similar reasons. The general criteria that we apply (and have been implicitly applied also in \cite{ABS19}) are listed in Lemmas \ref{lem:Syz2(X)=X}, \ref{lem:Syz2-YZ}, \ref{lem:Syz2-notX} below. However, syzygy schemes need not be reduced in general, see for example Proposition \ref{prop:N2} below.

	\subsection{The second syzygy schemes}
	
	We focus next on the case $p=2$. Since a non--degenerate variety $X$ is contained in any of its associated syzygy varieties, and syzygy varieties are cut out by quadrics, it is natural to ask the following question.
	
	\begin{qn}
		\label{qn:Syz2}
		Assume the ideal of $X$ is generated by quadrics. Under which hypotheses does the equality $\Syz_2(X)=X$ hold?
	\end{qn}
	
	We will partially address this question for curves. 
	
	\medskip
	
	We analyze next this condition in full generality and immediately obtain the following criterion for $\Syz_2(X)$ to be equal to $X$. 
	
	\begin{lemma}
		\label{lem:Syz2(X)=X}
		Notation as above. If the map 
		\[
		\phi:K_{2,1}(X)\otimes V^\vee \to I_{X,2}
		\]
		is surjective, then $\Syz_2(X)=X$.	In other words,  $\Syz_2(X)=X$ if there exists a basis in $I_{X,2}$ formed by quadratic forms that are involved in linear syzygies of $X$.
	\end{lemma}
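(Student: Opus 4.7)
The plan is to unfold the definitions and observe that the surjectivity of $\phi$ collapses the second syzygy ideal onto the ideal generated by $I_{X,2}$. Since the standing assumption of this section (cf.\ Question \ref{qn:Syz2}) is that $I_X$ is generated in degree two, this will identify the second syzygy ideal with $I_X$ itself, giving $\Syz_2(X)=X$ scheme-theoretically.

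First I would record the automatic containment $X\subseteq \Syz_2(X)$. For every $0\ne \gamma\in K_{2,1}(X)$ the quadrics in $\Im(\overline{\gamma})$ vanish on $X$ by construction, so summing over $\gamma$ one obtains $\Im(\phi)\subseteq I_{X,2}$, and therefore the second syzygy ideal $(\Im(\phi))$ is contained in $I_X$.

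Next, for the reverse inclusion, I would invoke the surjectivity hypothesis, which forces $\Im(\phi)=I_{X,2}$. Then the second syzygy ideal coincides with the ideal $(I_{X,2})$ generated by the quadrics of $X$; under the standing assumption that $I_X$ is generated in degree two, this yields $(\Im(\phi))=I_X$ and hence $\Syz_2(X)=\bV(I_X)=X$ as schemes.

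Finally, to justify the ``in other words'' reformulation I would note that $\Im(\phi)$ is, by construction, the linear span of all quadrics in $\Sym^2V$ that are involved in some linear syzygy of $X$. Therefore surjectivity of $\phi$ is equivalent to the span of such quadrics being the whole of $I_{X,2}$, which by elementary linear algebra is equivalent to the existence of a basis of $I_{X,2}$ selected from among quadrics involved in linear syzygies. The whole argument is a direct unwinding of the definitions, so there is no serious obstacle; the only place where anything beyond tautology enters is the identification $(I_{X,2})=I_X$, which genuinely relies on the quadratic generation of $I_X$.
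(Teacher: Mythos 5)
Your proposal is correct and takes the same route as the paper, whose entire proof is ``We apply the definition'': since $\Syz_2(X)=\bV(\Im(\phi))$ by construction, surjectivity of $\phi$ gives $\Im(\phi)=I_{X,2}$, and the standing hypothesis of Question~\ref{qn:Syz2} that $I_X$ is generated by quadrics yields $\Syz_2(X)=\bV(I_X)=X$. Your explicit identification of where quadratic generation is used, and the linear-algebra equivalence justifying the reformulation, simply spell out what the paper leaves implicit.
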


	\proof
	We apply the definition.
	\endproof
	
	From the projective--geometry view point, the map $\phi$ above is surjective if and only if the union of linear subspaces $\mathbf{P}(\gamma)$ for all non--zero syzygies $\gamma$ of $X$ span the whole space of quadrics that contain $X$, i.e.
	\[
	\left\langle \bigcup_{0\ne\gamma\in K_{2,1}(X)}\mathbf{P}(\gamma)\right\rangle=\mathbf{P}(I_{X,2}).
	\]
	
	In some particular cases, it might be that $\mathbf{P}(\gamma)$ already equals $\mathbf{P}V^\vee$ for a given $\gamma$. For example, take the twisted cubic in $\bP^3$ with homogeneous coordinates $x_0,x_1,x_2,x_3$. The ideal of the twisted cubic is generated by the three quadrics $Q_{01}=x_0x_2-x_1^2$, $Q_{02}=x_0x_3-x_1x_2$ and $Q_{12}=x_1x_3-x_2^2$. For 
	\[
	\gamma=\frac{1}{2}x_0\wedge x_1\otimes x_3-x_0\wedge x_2\otimes x_2+\frac{1}{2}x_0\wedge x_3\otimes x_1+x_1\wedge x_2\otimes x_1-\frac{1}{2}x_1\wedge x_3\otimes x_0
	\] 
	we have $\delta(\gamma)=x_0\otimes Q_{12}-x_1\otimes Q_{02}+x_2\otimes Q_{01}$, and hence $\Syz(\gamma)$ equals the twisted cubic. 
	In other cases, the linear spaces $\mathbf{P}(\gamma)$ might be strictly smaller than the ambient space $\bP(I_{X,2})$, while their union coincides with the whole space. This is the case in the following general situation:

	\begin{prop}[\cite{ABS19}, Prop. 2]
		\label{prop:N2}
		Assume $X$ is projectively normal, of codimension at least 2, and satisfies property $(N_2)$. Then any quadric in the ideal of $X$ is involved in some non--trivial linear syzygy of $X$ i.e.
		\[
		\bigcup_{0\ne\gamma\in K_{2,1}(X)}\mathbf{P}(\gamma)=\mathbf{P}(I_{X,2}).
		\]
		In particular, $\Syz_2(X)=X$.
	\end{prop}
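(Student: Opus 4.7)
The plan is to proceed by contradiction. By Lemma~\ref{lem:Syz2(X)=X}, the ``in particular'' claim follows from surjectivity of $\phi:K_{2,1}(X)\otimes V^\vee\to I_{X,2}$, so it suffices to prove that map is surjective. Assuming the contrary, I pick a nonzero $\lambda\in I_{X,2}^\vee$ annihilating $\Im(\phi)$. Unpacking the definition of $\phi$, this is equivalent to the condition
\[
\sum_j \lambda(Q_j)\, v_j = 0 \quad \text{in } V
\]
for every linear syzygy $\gamma=\sum_j v_j\otimes Q_j\in K_{2,1}(X)$.

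Next I would exploit $(N_2)$ via the minimal free resolution of $I_X$, which under this hypothesis begins
\[
\Sym V(-3)\otimes K_{2,1}(X)\to \Sym V(-2)\otimes I_{X,2}\to I_X\to 0,
\]
giving in degree $4$ the exact sequence
\[
V\otimes K_{2,1}(X)\to \Sym^2 V\otimes I_{X,2}\xrightarrow{\mu} I_{X,4}\to 0.
\]
I would then introduce the auxiliary linear map $\psi:\Sym^2 V\otimes I_{X,2}\to \Sym^2 V$, $\sigma\otimes R\mapsto \lambda(R)\sigma$. The syzygy condition above is exactly what is needed to ensure that $\psi$ kills every element of the form $\sum_j v\cdot v_j\otimes Q_j$ lying in $\Im(V\otimes K_{2,1}(X))=\ker\mu$, and hence that $\psi$ descends to a well-defined map $\bar\psi:I_{X,4}\to \Sym^2 V$.

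The decisive observation is that for any $R_1,R_2\in I_{X,2}$, the product $R_1 R_2\in I_{X,4}$ admits two obvious preimages $R_1\otimes R_2$ and $R_2\otimes R_1$ in $\Sym^2 V\otimes I_{X,2}$, and well-definedness of $\bar\psi$ forces the symmetric identity
\[
\lambda(R_2)\, R_1 \;=\; \lambda(R_1)\, R_2\quad \text{in }\Sym^2 V
\]
for every pair. Since codimension $\ge 2$ together with the quadratic generation of $I_X$ (which follows from $(N_2)$) yields $\dim I_{X,2}\ge 2$, one can pick $R_1,R_2$ linearly independent inside $\Sym^2 V$; the identity then forces $\lambda(R_1)=\lambda(R_2)=0$, and letting $R_2$ range over a complement to $\mathbb{C}R_1$ yields $\lambda=0$, the desired contradiction.

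The main hurdle in this plan is setting up the extension $\bar\psi$: this is precisely where $(N_2)$ is used in an essential way, since linearity of the first syzygies of $I_X$ is exactly what identifies $\ker\mu$ with the image of $V\otimes K_{2,1}(X)$. The codimension hypothesis, by contrast, is only used at the very last step to guarantee the existence of a linearly independent pair inside $I_{X,2}$.
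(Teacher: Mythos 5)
Your argument is correct and is, at bottom, the paper's own argument in dualized form: both proofs rest on the single fact that, under $(N_2)$, the Koszul relations $R_1\cdot R_2-R_2\cdot R_1=0$ among the quadric generators are consequences of the linear syzygies, i.e. $R_1\otimes R_2-R_2\otimes R_1\in\ker(\mu)=\Im\bigl(V\otimes K_{2,1}(X)\bigr)$, which is exactly the degree--$4$ strand of the resolution you write down. The paper chooses a basis $Q=Q_1,\ldots,Q_m$ of $I_{X,2}$, observes that every linear syzygy then has vanishing first coordinate, and reads off the first coordinate of the Koszul relation $(-Q_2,Q_1,0,\ldots,0)$ to get $Q_2=0$; your functional $\lambda$ is the ``coefficient of $Q_1$'' functional made coordinate--free, and your identity $\lambda(R_2)R_1=\lambda(R_1)R_2$ is the same computation. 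Your use of codimension $\ge 2$ to get $\dim I_{X,2}\ge 2$ matches the paper's parenthetical ``(at least two) quadratic forms''. So: same approach, cleaner packaging.

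One point of precision. What you prove is surjectivity of $\phi$, equivalently $\bigl\langle\bigcup_{\gamma}\mathbf{P}(\gamma)\bigr\rangle=\mathbf{P}(I_{X,2})$; the displayed claim of Proposition \ref{prop:N2} is the set--theoretic union $\bigcup_{\gamma}\mathbf{P}(\gamma)=\mathbf{P}(I_{X,2})$, i.e. that each individual quadric lies in $\Im(\overline{\gamma})$ for a \emph{single} $\gamma$, which is a priori stronger (a union of proper subspaces can span without covering). Your reduction ``it suffices to prove the map is surjective'' is valid for the conclusion $\Syz_2(X)=X$ via Lemma \ref{lem:Syz2(X)=X} --- and that is all the paper uses later --- but it does not literally deliver the first assertion. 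Note, however, that the paper's own proof has the same feature: the step ``$Q_1$ not involved in any syzygy $\Rightarrow\ell_{i1}=0$ for all $i$'' really uses that $Q_1$ is not in the \emph{span} of the quadrics involved in syzygies (i.e. that some coordinate functional annihilates $\Im(\phi)$), so it too is an argument for the span statement. You are not losing anything the paper actually establishes, but if you want the union statement verbatim you would need an additional argument producing, for each $Q$, one syzygy whose image contains $Q$ itself.
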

	
	\proof
	For convenience, we repeat here the argument from \cite{ABS19} given for varieties. By hypothesis, the ideal of $X$ is generated by (at least two) quadratic forms, and hence it suffices to show that every $Q\in I_{X,2}$  is involved in a linear syzygy of $X$. 
	
	Assume that a quadratic form $Q$ is not involved in any linear syzygy of $X$, and choose a basis $Q=Q_1,\ldots,Q_m$ of $I_{X,2}$.  A basis for the space of linear syzygies is formed by rows of linear foms $R_i=(\ell_{i1},\ldots,\ell_{im})$ with $\sum\ell_{ij}\cdot Q_j=0$ for all $i$. Since $Q_1$ is not involved in any linear syzygy, we must have $\ell_{i1}=0$ for all $i$. The quadratic syzygy $-Q_2\cdot Q_1+Q_1\cdot Q_2=0$
	is represented by the row $(-Q_2,Q_1,0,\ldots,0)$, and hence it is a combination of rows $R_i$ with linear forms as coefficients. We obtain $Q_2=0$, which is absurd. 
	\endproof

	This Proposition applies, for example, to rational normal curves of degree at least 4. Note also that, if $X$ is non--reduced, this Proposition produces non--reduced syzygy schemes, for example, if $X$ is a ribbon, see \cite{BE}, \cite{RS}.
	
	\medskip
	
	If the property $(N_2)$ fails, the general strategy to verify that $\Syz_2(X)=X$ is to find a basis of quadratic forms in the ideal of $X$ which are all involved in syzygies of $X$.
	In this direction, the following result is quite useful, and will be applied in the next section.

	\begin{lemma}
		\label{lem:Syz2-YZ}
		Let $Y \subset Z \subset \bP V^\vee$ with the property $(N_1)$, and assume that, scheme--theoretically
		\[
		Y = Z \cap V(Q_1) \cap \ldots \cap V(Q_m)
		\]
		with $Q_i \in I_{Y,2}$ such that each $Q_i$ is involved in a linear syzygy of $Y$. 
		Assume that the map $K_{2,1}(Z)\otimes V^\vee\to I_{Z,2}$ is surjective (which implies $\mathrm{Syz}_2(Z)=Z$). Then the map $K_{2,1}(Y)\otimes V^\vee\to I_{Y,2}$ is also surjective, and hence 
		we also have the equality $\mathrm{Syz}_2(Y) =Y$.
	\end{lemma}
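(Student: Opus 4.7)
The plan is to apply Lemma \ref{lem:Syz2(X)=X} to $Y$, i.e., to prove surjectivity of $\phi_Y : K_{2,1}(Y) \otimes V^\vee \to I_{Y,2}$. The first step is to establish the decomposition
\[
I_{Y,2} = I_{Z,2} + \Span(Q_1, \ldots, Q_m).
\]
Because $Y$ and $Z$ both satisfy property $(N_1)$, their saturated ideals are generated in degree two, so the scheme--theoretic equality $Y = Z \cap V(Q_1) \cap \ldots \cap V(Q_m)$ translates to an equality of graded ideals $I_Y = I_Z + (Q_1, \ldots, Q_m)$, and extracting the degree--two piece yields the displayed decomposition. This reduces the problem to showing that both $I_{Z,2}$ and each $Q_i$ lie in $\Im(\phi_Y)$.

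For the inclusion $I_{Z,2} \subset \Im(\phi_Y)$, I would use functoriality of linear syzygies under inclusion of ideals. Since $I_{Z,2} \subset I_{Y,2}$, a linear syzygy $\gamma = \sum_i \ell_i \otimes R_i \in K_{2,1}(Z)$, viewed as an element of $V \otimes I_{Z,2}$ with $\sum_i \ell_i R_i = 0$ in $\Sym^3 V$, is \emph{a fortiori} an element of $V \otimes I_{Y,2}$ satisfying the same relation, and hence defines a linear syzygy of $Y$. This gives an inclusion $K_{2,1}(Z) \hookrightarrow K_{2,1}(Y)$ under which $\phi_Y$ restricts to $\phi_Z : K_{2,1}(Z) \otimes V^\vee \to I_{Z,2}$. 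By the hypothesized surjectivity of $\phi_Z$, the whole space $I_{Z,2}$ lies in the image of $\phi_Y$. On the other hand, by hypothesis each $Q_i$ is involved in some linear syzygy of $Y$, so $Q_i \in \Im(\phi_Y)$ directly.

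Combining these two inclusions with the decomposition of $I_{Y,2}$ above, one concludes that $\phi_Y$ is surjective, and Lemma \ref{lem:Syz2(X)=X} then gives $\Syz_2(Y) = Y$. The main point requiring care is the first step, the degree--two decomposition of $I_{Y,2}$: while the scheme--theoretic equality $Y = Z \cap V(Q_1) \cap \ldots \cap V(Q_m)$ controls the saturated ideal of $Y$, one must use property $(N_1)$ for both $Y$ and $Z$ to ensure that this equality already holds at the level of degree--two forms rather than only asymptotically. Once the decomposition is in place, the rest is a straightforward bookkeeping of linear syzygies and an appeal to Lemma \ref{lem:Syz2(X)=X}.
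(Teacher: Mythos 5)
Your proposal follows essentially the same route as the paper's proof: establish $I_{Y,2}=I_{Z,2}+\Span(Q_1,\ldots,Q_m)$, note that linear syzygies of $Z$ are \emph{a fortiori} linear syzygies of $Y$ so that $I_{Z,2}\subset\Im(\phi_Y)$, and combine with the hypothesis on the $Q_i$ to get a basis of $I_{Y,2}$ consisting of quadrics involved in linear syzygies of $Y$. The only difference is that you spell out the degree--two decomposition step, which the paper simply asserts; your argument is correct.
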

	
	\begin{proof}
		As observed before, the surjectivity of the map is equivalent to the existence of a basis in $I_{Y,2}$ such that any element of the basis in involved in a linear syzygy of $Y$. By the hypothesis, there exists a basis in $I_{Z,2}$ such that any of these quadratic forms is involved in a syzygy of $Z$, and hence also involved in a syzygy of $Y$. On the other hand, we have $I_{Y,2}=I_{Z,2}+\Span(Q_1,\ldots,Q_r)$, and $Q_1,\ldots,Q_r$ are involved in syzygies of $Y$. Hence we can find a basis in $I_{Y,2}$ composed only by quadratic forms involved in linear syzygies of~$Y$.
	\end{proof}
	
	On the negative side, we have the following result.
	
	\begin{lemma}
		\label{lem:Syz2-notX}
		Let $Z \subset \mathbf{P}^N$ with the property $(N_2)$, and $Y\subset Z$ with the property $(N_1)$ and assume that, scheme--theoretically
		\[
		Y = Z \cap V(Q)
		\]
		with $Q \in I_{Y,2}$ such that $Q$ is not involved in any linear syzygy of $Y$. Then 
		\[
		\mathrm{Syz}_2(Y) =Z.
		\]
	\end{lemma}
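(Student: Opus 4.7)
The plan is to identify the degree-two part $J_2$ of the second syzygy ideal of $Y$ with $I_{Z,2}$; combined with $(N_1)$ for $Z$, this will yield the ideal-theoretic equality $J=I_Z$ and hence the scheme-theoretic conclusion $\mathrm{Syz}_2(Y)=Z$.

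First I would set up the chain of inclusions $I_{Z,2}\subset J_2\subset I_{Y,2}$. The left inclusion follows from Proposition~\ref{prop:N2} applied to $Z$: every quadric of $I_{Z,2}$ is involved in some linear syzygy of $Z$, and this syzygy is inherited by $Y$ through the natural injection $K_{2,1}(Z)\hookrightarrow K_{2,1}(Y)$ coming from $I_Z\subset I_Y$ (the Koszul relation $\sum \ell_j Q_j=0$ is independent of which ideal contains the $Q_j$'s). The right inclusion is tautological. Since $Y=Z\cap V(Q)$ scheme-theoretically forces $I_{Y,2}=I_{Z,2}+\mathbb{C}Q$, the quotient $I_{Y,2}/I_{Z,2}$ is one-dimensional, so $J_2$ must coincide with either $I_{Z,2}$ or $I_{Y,2}$. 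Together with $(N_1)$ for $Z$, the first alternative gives the desired equality $\mathrm{Syz}_2(Y)=Z$, while the second alternative would force $\mathrm{Syz}_2(Y)=Y$ via $(N_1)$ for $Y$.

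The hypothesis on $Q$ is what excludes the second possibility. If $J_2=I_{Y,2}$ then $Q$ lies in the linear span of quadrics involved in syzygies of $Y$; one then picks $\gamma_0\in K_{2,1}(Y)$ and $v_0\in V^\vee$ with $\overline{\gamma_0}(v_0)=cQ+R'$ for $c\neq 0$ and $R'\in I_{Z,2}$, invokes Proposition~\ref{prop:N2} on $Z$ applied to $R'$ to exhibit $\eta\in K_{2,1}(Z)\subset K_{2,1}(Y)$ realizing $R'$ as $\bar\eta(v_0)$, and concludes that $\overline{\gamma_0-\eta}(v_0)=cQ$, so $Q\in\mathrm{Im}(\overline{\gamma_0-\eta})$ — contradicting the hypothesis that $Q$ is not involved in any linear syzygy of $Y$. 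Therefore $J_2=I_{Z,2}$, and $(N_1)$ for $Z$ upgrades this to $J=I_Z$, proving $\mathrm{Syz}_2(Y)=Z$.

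The main technical obstacle is the alignment step: producing $\eta\in K_{2,1}(Z)$ with $\bar\eta(v_0)=R'$ for the specific $v_0$ at hand, and not merely for some other $v'\in V^\vee$ (which is all that Proposition~\ref{prop:N2} gives directly). This amounts to surjectivity of the evaluation map $K_{2,1}(Z)\to I_{Z,2}$ at a fixed linear functional, and is the essential extra use of $(N_2)$ for $Z$ beyond Proposition~\ref{prop:N2}; it should follow by combining the proof of Proposition~\ref{prop:N2} with the freedom of adding Koszul syzygies of $Z$ that vanish at $v_0$.
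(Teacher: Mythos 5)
Your overall skeleton matches the paper's: both arguments amount to showing that the degree--two part $J_2$ of the second syzygy ideal of $Y$ equals $I_{Z,2}$, with the inclusion $I_{Z,2}\subseteq J_2$ coming from Proposition~\ref{prop:N2} applied to $Z$ together with $K_{2,1}(Z)\subseteq K_{2,1}(Y)$, and the reverse inclusion obtained by showing that $J_2=I_{Y,2}$ would force $Q$ to be involved in a linear syzygy of $Y$. The gap is in your execution of this last, crucial step. You reduce it to the surjectivity of the evaluation map $K_{2,1}(Z)\to I_{Z,2}$, $\eta\mapsto\overline{\eta}(v_0)$, at the \emph{fixed} functional $v_0$, and you assert this ``should follow'' from $(N_2)$. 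It does not: this map is never surjective for a non--degenerate $Z$ of codimension at least $2$ cut out by quadrics. Indeed, write $\delta(\eta)=\sum_j x_j\otimes P_j$ with $\sum_j x_jP_j=0$ in $\Sym^3V$; evaluating this identity at the point $[v_0]\in\bP V^\vee$ gives $\sum_j v_0(x_j)\,P_j(v_0)=0$, i.e.\ the quadric $\overline{\eta}(v_0)$ vanishes at $[v_0]$. Hence the image of your evaluation map is contained in the quadrics of $I_{Z,2}$ passing through $[v_0]$, a proper subspace whenever $[v_0]\notin Z$; and when $[v_0]\in Z$ the same computation, carried one order further, shows $\overline{\eta}(v_0)$ must be \emph{singular} at $[v_0]$, again a proper condition. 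Worse, the quadric $R'$ you need to hit is not even guaranteed to satisfy the necessary condition: the same evaluation applied to $\gamma_0$ only gives $cQ(v_0)+R'(v_0)=0$. So the ``alignment'' step is genuinely unavailable and the argument does not close as written.

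The paper circumvents this by never fixing an evaluation functional. Using the direct sum $I_{Y,2}=I_{Z,2}\oplus\langle Q\rangle$, it decomposes the whole tensor $\delta(\gamma_0)\in V\otimes I_{Y,2}$ as $\tilde\ell\otimes Q+\eta$ with $\eta\in V\otimes I_{Z,2}$; the syzygy relation then reads $\tilde\ell\cdot Q+\mu(\eta)=0$, and $\tilde\ell\ne 0$ precisely because $\Im(\overline{\gamma_0})\not\subseteq I_{Z,2}$. This exhibits $Q$ appearing with a non--zero linear coefficient, all remaining quadrics lying in $I_{Z,2}$, in a non--trivial linear syzygy of $Y$ --- which is exactly the paper's notion of $Q$ being involved in a linear syzygy --- contradicting the hypothesis with no auxiliary surjectivity statement. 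Replacing your evaluation--at--$v_0$ manoeuvre by this regrouping along the splitting of $I_{Y,2}$ turns your outline into the paper's proof.
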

	
	\proof
	Note that, by hypothesis, $I_{Y,2}=I_{Z,2}+\langle Q\rangle$ and $\Syz_2(Z)=Z$. 
	
	We claim that given any quadratic form $Q'\in I_{Y,2}\setminus I_{Z,2}$, we have that $Q'$ is not involved in any linear syzygy of $Y$. Indeed, assume this is not the case, then there exist quadratic forms $Q'_1,\ldots,Q'_r\in I_{Y,2}$ and linear forms $\ell',\ell'_1,\ldots,\ell'_r$ such that $\ell'\otimes Q'+\sum  \ell'_i\otimes Q'_i\ne 0$ and
	\begin{equation}
		\label{eq:Syz}
		\ell'\cdot  Q'+\sum \ell'_i\cdot  Q'_i=0.
	\end{equation} 
	Upon rescaling simultaneously the quadratic forms and the linear forms involved in the given relation, we may assume $Q'=Q+Q''$, $Q'_i=Q+Q''_i$, with $Q'', Q_i''\in I_{Z,2}$. Replacing $Q',Q'_i$ in (\ref{eq:Syz}) we find that $Q$ is involved in a linear syzygy, contradiction.

	Hence, the only quadratic forms in $I_{Y,2}$ involved in a syzygy are those of $I_{Z,2}$, closing the proof.
	\endproof
	
	%
	%
	%
	%
	%

	\subsection{Semi--continuity of syzygy ideals}
	We prove next a semicontinuity result for syzygy ideals.
	
	\begin{thm}
		\label{thm:semicontinuity}
		Suppose $\cX\in S\times \bP V^\vee$ is a flat family of non--degenerated projective schemes with property $(N_1)$, parameterized by a scheme $S$ such that the  graded Betti numbers of the fibres $\cX_s$ of the projection to $S$ are constant in the family; in particular, the Hilbert function of $\cX_s$ is independent on $s$. Then the function
		\[
		S\ni s\mapsto \mathrm{dim}
		\left(\Im\{K_{2,1}(\cX_s)\otimes V^\vee\stackrel{\phi_s}{\longrightarrow} I_{\cX_s,2}\}\right)
		\]
		is lower--semicontinuous. In particular, the locus 
		\[
		\{s\in S|\ \phi_s\mbox{ is surjective}\}
		\]
		is open.
	\end{thm}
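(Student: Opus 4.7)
The plan is to realize $\phi$ as the fiber of a single morphism of locally free sheaves on $S$ and then invoke the standard lower--semicontinuity of the rank.

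Denote by $\pi:S\times\bP V^\vee\to S$ the first projection. Since $\cX\to S$ is flat and the Hilbert function $n\mapsto h^0(\cX_s,\cO(n))$ is independent of $s$, cohomology and base change applied to the ideal sheaf sequence show that each direct image $\mathcal{I}_n:=\pi_*\mathcal{I}_{\cX/S\times\bP V^\vee}(n)$ is locally free on $S$, with fiber at $s$ canonically $I_{\cX_s,n}$. The multiplication map $V\otimes\mathcal{I}_2\to\mathcal{I}_3$ is then a globally defined morphism of locally free sheaves whose fiber at $s$ is the Koszul--type differential $V\otimes I_{\cX_s,2}\to I_{\cX_s,3}$. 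By the identification $K_{2,1}(\cX_s)\cong\ker\{V\otimes I_{\cX_s,2}\to I_{\cX_s,3}\}$ recalled in the preceding subsection, together with the constancy of $\beta_{2,1}(\cX_s)$, the kernel
\[
\mathcal{K}_{2,1}:=\ker\{V\otimes\mathcal{I}_2\to\mathcal{I}_3\}
\]
is a locally free sheaf of constant rank $\beta_{2,1}$ whose formation commutes with arbitrary base change; here one invokes the standard fact that a morphism of locally free sheaves with fiberwise constant rank has locally free kernel, image and cokernel, all compatible with base change. In particular the fiber of $\mathcal{K}_{2,1}$ at $s$ is canonically $K_{2,1}(\cX_s)$.

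The natural contraction $V\otimes V^\vee\to\cO_S$ then yields a morphism of locally free sheaves on $S$
\[
\Phi:\mathcal{K}_{2,1}\otimes V^\vee\hookrightarrow V\otimes V^\vee\otimes\mathcal{I}_2\to\mathcal{I}_2
\]
whose fiber at $s$ is exactly the map $\phi_s$ appearing in the statement. Since the rank of a morphism of locally free sheaves is lower--semicontinuous on the base (the locus $\{\rk\le k\}$ is closed, cut out by the $(k+1)\times(k+1)$ minors of a local matrix representation), the function $s\mapsto\dim\Im(\phi_s)=\rk(\Phi_s)$ is lower--semicontinuous, and the surjectivity locus $\{s\in S\mid\rk(\Phi_s)=\dim I_{\cX_s,2}\}$ is open. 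The only non--cosmetic point is the base--change compatibility used in constructing $\mathcal{I}_n$ and $\mathcal{K}_{2,1}$, which is precisely what the combined hypotheses on flatness, constant Hilbert function, and constant $\beta_{2,1}$ are designed to guarantee; beyond this, the argument is formal.
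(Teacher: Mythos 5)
Your proposal is correct and follows essentially the same route as the paper: both globalize the maps $\phi_s$ to a morphism $\Phi$ of locally free sheaves on $S$ (the paper obtains the relative Koszul bundle by citing \cite{AN}, Prop.~1.31, whereas you build it by hand as the kernel of the relative multiplication map on the pushforwards $\mathcal{I}_n$, which is the same construction) and then conclude by semicontinuity of the rank of a bundle morphism, i.e.\ the theory of determinantal loci.
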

	
	\proof
	From \cite{AN}, Prop. 1.31 and the hypothesis, there exists a vector bundle $\cK_{2,1}(\cX/S)$ over $S$ whose fibres are $\cK_{2,1}(\cX/S)\otimes k(s)\cong K_{2,1}(\cX_s)$ for all $s\in S$ (in this case, the open subset $U$ mentiones in loc.cit. coincides with $S$). Since the dimension of $I_{\cX,2}$ is independent of $s$ as well, there exists another vector bundle $\mathcal J$ on $S$ with fibres $\mathcal{J}\otimes k(s)=I_{\cX,2}$ for all $s\in S$. The bundle $\mathcal{J}$ is the kernel of the natural vector bundle morphism 
	\[
	\cO_S\otimes \Sym^2V=p_{1,*}(\cO_S\boxtimes \cO_{\bP V^\vee}(2))\to p_{1,*}((\cO_S\boxtimes \cO_{\bP V^\vee}(2))|_\cX).
	\]
	
	The collection of linear maps $(\phi_s)_{s\in S}$ globalize to a vector bundle morphism 
	\[
	\Phi:\cK_{2,1}(\cX/S)\otimes V^\vee\to \mathcal{J},
	\]
	and the general theory of determinantal loci concludes the proof. 
	\endproof
	
	Note that working in the case $p=2$ is not essential here, and a similar statement with identical conclusions holds for any $p\ge 2$.
	

	\subsection{The second syzygy schemes of canonical curves}
	In our previous work \cite{ABS19}, we answered Question \ref{qn:Syz2} for canonical curves. For convenience, taking into account that the proof of the main result of the present paper goes by reduction to the canonical case, we recall here the strategy of \cite{ABS19}. As already mentioned in Proposition \ref{prop:N2} we proved first that for an arbitrary projective variety $X$, the property $(N_2)$ implies that $\Syz_2(X)$ is equal to $X$. Taking into account that the property $(N_2)$ is satisfied for a canonical curve $C$ if the Clifford index is at least three, \cite{Vo} and \cite{fS91}, we can limit to considering the case of curves of Clifford index two. Plane sextics were treated in \cite{ABS19}, Section 3, and we found that the second syzygy scheme is the Veronese surface that contains the curve; to conclude, we tacitly apply Lemma \ref{lem:Syz2-notX}. From the same Lemma \ref{lem:Syz2-notX}, we infer that the second syzygy scheme of a curve of genus between 5 and 10 that is contained in a del Pezzo surface coincides with the del Pezzo surface, see \cite{ABS19}, Section 4. Similarly, the second syzygy scheme of a bielliptic curve is a cone over an elliptic curve.
	The next case is that of canonical tetragonal curves non--bielliptic of genus at least 11. They are naturally contained in a three--dimensional scroll and we analyze their equations in the scroll, closely following \cite{fS86}. Using the rolling--factors trick, in loc.cit. we prove the following:
	
	\begin{thm}
		\label{thm:ABS}
		Let $C$ be a non--bielliptic tetragonal canonical curve of genus $\ge 11$. Then the natural map:
		\[
		K_{2,1}(C)\otimes H^0(C,K_C)^\vee\to I_{C,2}
		\]
		is surjective.
	\end{thm}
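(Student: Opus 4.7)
The plan is to use Schreyer's description of a non-bielliptic tetragonal canonical curve $C \subset \bP^{g-1}$ as a subvariety of a 3-dimensional rational normal scroll $Y$, and to exhibit an explicit basis of $I_{C,2}$ consisting of quadrics each of which appears in a non-trivial linear syzygy of $C$. Let $|g^1_4|$ denote the tetragonal pencil (unique under our hypotheses, since $C$ is non-bielliptic of genus $\ge 11$). The 3-planes spanned by the divisors of $g^1_4$ sweep out a scroll $Y\subset\bP^{g-1}$ of degree $g-3$, whose ideal $I_Y$ is generated by the $2\times 2$ minors of a $2\times(g-3)$ matrix $M$ of linear forms. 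Following \cite{fS86}, the ideal $I_{C,2}$ admits the decomposition $I_{C,2}=I_{Y,2}\oplus\Span(Q_0,\ldots,Q_{s})$, where the $Q_i$ are the rolling-factors quadrics: writing $C$ inside $Y$ as the zero locus of a section whose restriction to each ruling is a quartic, the coefficients of that quartic, paired with the two rulings, produce a sequence of quadrics $Q_i$ on $\bP^{g-1}$ with the rolling-factors relations among them.

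First I would treat the scroll minors. The resolution of $I_Y$ is the Eagon--Northcott complex, whose first map on linear syzygies comes from expanding $3\times 3$ minors of the $3\times (g-3)$ matrices obtained by stacking one extra row of linear forms onto $M$. Each such identity is a linear combination $\sum_j \ell_j \cdot Q_j = 0$ of the $2\times 2$ minors $Q_j$ with linear coefficients, and the span of the resulting linear syzygies contains every minor of $M$. Since $C \subset Y$, these relations remain valid in $\mathrm{Sym}^3 V$, so each generator of $I_{Y,2}$ is involved in a linear syzygy of $C$. This handles the $I_{Y,2}$--part of the basis with no use of the rolling-factors trick.

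Next I would handle the rolling-factors quadrics $Q_0,\ldots,Q_s$. The rolling-factors construction is engineered so that for each $i$ one has a relation of the form $\ell\cdot Q_i - \ell'\cdot Q_{i+1}\in I_Y\cdot V$, i.e.\ $\ell\cdot Q_i - \ell'\cdot Q_{i+1} = \sum_j \lambda_j\cdot m_j$, where $\ell,\ell'$ come from the two rulings of the scroll, $m_j$ are the $2\times 2$ minors of $M$, and $\lambda_j$ are linear forms. Rewriting, this gives a non-trivial element of $V\otimes I_{C,2}$ mapping to zero in $\mathrm{Sym}^3V$, i.e.\ a linear syzygy of $C$ in which $Q_i$ is involved (because the coefficient $\ell$ of $Q_i$ is non-zero). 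Doing this for every consecutive pair produces linear syzygies of $C$ in which each $Q_i$ is involved, completing a basis of $I_{C,2}$ by quadrics lying in the image of $\phi$.

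The main obstacle will be in the second step: verifying that the rolling-factors syzygies do not degenerate to zero in $K_{2,1}(C)$ and that they genuinely involve each $Q_i$ individually, rather than only certain combinations. This is where the numerical hypotheses enter. The condition $g\ge 11$ guarantees that the scroll type is in the stable range so that the rolling-factors sequence has enough terms and the relations $\ell\cdot Q_i - \ell'\cdot Q_{i+1}\in I_Y\cdot V$ can be produced in full; the non-bielliptic hypothesis ensures the uniqueness of the $g^1_4$ and excludes degenerate scroll geometries where $C$ could fail to intersect each ruling in a genuine quartic. Once these checks are in place, Lemma~\ref{lem:Syz2(X)=X} applies and yields the surjectivity of $\phi$.
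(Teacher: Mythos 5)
Your proposal follows essentially the same route as the paper's proof (which is only cited here from \cite{ABS19}): Schreyer's realization of the tetragonal canonical curve inside a three--dimensional scroll, Eagon--Northcott syzygies for the scroll quadrics, and the rolling--factors trick for the remaining quadrics, with the hypotheses $g\ge 11$ and non--bielliptic entering exactly where you say, namely to guarantee that both scroll invariants $b_1,b_2$ (with $b_1+b_2=g-5$) are at least $1$, so that every rolling--factors quadric has a partner in a linear relation modulo $I_{Y,2}\cdot V$. Two small corrections: the divisors of the $g^1_4$ span $2$--planes, not $3$--planes, by geometric Riemann--Roch, and the degeneracy to exclude is $b_2=0$, which for genus $\ge 11$ occurs only for bielliptic curves (the $g^2_6$ case being impossible in this genus range).
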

	
	In particular, we find that the second syzygy scheme coincides with the curve itself. 
	Note that in \cite{ABS19} we have not been particularly concerned with the scheme structure of $\Syz_2(C)$ for $C$ a canonical curve, however, the comments above show that everything proved in loc.cit. is valid at the scheme structure level.

	\medskip
	
	In the subsequent sections, we analyze the case of curves of degree at least $2g+2$, and their study is subsequently reduced to the canonical embedding where the results of \cite{ABS19} apply.


	\section{Curves with property $(N_2)$}
	\label{sec:non4-secant}
	
	Let $C$  be a projective nonsingular curve of genus $g$ and $L$ a very ample invertible sheaf of degree $d$ on $C$.  We will need the following result:
	
	\begin{thm}[Green--Lazarsfeld]
		\label{T:GL}
		Notation as above.
		\begin{itemize}
			\item[(i)]  If $d \ge 2g+1+p$  then $L$ satisfies $(N_p)$.
			\item[(ii)] If $d=2g+p$ then $L$ satisfies $(N_p)$ unless $C$ is hyperelliptic or has a $(p+2)$-secant $p$-plane.
		\end{itemize}
		
	\end{thm}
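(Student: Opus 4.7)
The plan is to translate property $(N_p)$ into a cohomological vanishing on $C$ via the evaluation (or \emph{kernel}) bundle
\[
0\to M_L\to H^0(C,L)\otimes \mathcal{O}_C\to L\to 0.
\]
Taking exterior powers and twisting, a standard computation of Koszul cohomology (going back to Green) gives $K_{p,1}(C,L)=0$ for $q\geq 1$ if and only if $H^1(C,\wedge^{p+1}M_L\otimes L^q)=0$ for all $q\ge 1$, and the whole statement $(N_p)$ is equivalent to these vanishings together with projective normality. The base case, projective normality for $d\ge 2g+1$, is classical Castelnuovo; throughout the argument I will assume it and concentrate on the wedge-power cohomology.

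For part (i), I would proceed by induction on $p$. Choose a general divisor $D=x_0+\cdots+x_p$ on $C$. Using the evaluation of $M_L$ at the points $x_i$ one obtains a filtration of $\wedge^{p+1}M_L$ whose graded pieces are (up to twist) of the form $L^{-1}(D')$ for subdivisors $D'\subset D$. The vanishing $H^1(C,\wedge^{p+1}M_L\otimes L)=0$ then reduces to $H^1(C,L(-D'))=0$ for the successive $D'$, which is Serre duality applied to $K_C\otimes L^{-1}(D')$; this in turn is implied by $\deg L-|D'|\ge 2g-1$, i.e.\ by $d\ge 2g+1+p$. The vanishings for $q\ge 2$ follow \emph{a fortiori} since the bundles become even more positive. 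A cleaner packaging is through Castelnuovo--Mumford regularity with respect to $L$: one shows $M_L$ is $0$-regular when $d\ge 2g+1$, whence $\wedge^{p+1}M_L\otimes L$ is $(-p)$-regular for $d\ge 2g+1+p$.

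For part (ii), with $d=2g+p$, the cohomology $H^1(C,\wedge^{p+1}M_L\otimes L)$ is borderline: the argument above degenerates because the relevant divisor $D'=D$ has $\deg L-\deg D=2g-1$ and $H^1(C,L(-D))$ can be nontrivial. Dualizing, a class must come from some effective $D$ of degree $p+1$ with $h^0(L(-D))\ge h^0(L)-p$, i.e.\ $D$ fails to impose independent conditions on $L$. Geometrically, $p+1$ points of $C$ whose images span only a $(p-1)$-plane means $C$ possesses a $(p+2)$-secant $p$-plane (after one additional degeneration). A parallel argument in the hyperelliptic case produces an obstruction from the $g^1_2$. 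To make this rigorous I would use the base-point-free pencil trick (or Green's $H^0$-lemma) applied to a suitable sub-pencil of $|L|$ through the exceptional divisor, reducing the non-vanishing to exactly one of the two geometric obstructions.

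The main obstacle is the sharpness assertion in part (ii): one must show that the \emph{only} obstructions to $(N_p)$ in the boundary case are the listed ones. This is a delicate monodromy/geometric statement, because one has to translate a failure of $H^1$-vanishing into an honest projective configuration of $p+2$ points on $C$ spanning a $p$-plane; the technical heart is the passage from abstract special divisors $D$ of degree $p+1$ on $L$ to the secant-plane dimension count, which in general requires a careful use of the duality between $K_{p,2}$ and $K_{h^0(L)-p-2,1}$ together with the analysis of when residual series on $C$ move.
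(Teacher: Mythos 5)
You should first be aware that the paper does not prove Theorem \ref{T:GL} at all: it is an attributed result, and the lines immediately following the statement simply cite \cite{mG84}, Thm.~4.a.1 for part (i) and \cite{GL88}, Thm.~2 for part (ii) (the statement having appeared earlier in \cite{GL86}). So the comparison here is between your sketch and the original proofs in the literature. For part (i), your kernel-bundle strategy is a standard and legitimate route (Green's original argument goes instead through the duality theorem and the $H^0$-lemma), but one step is stated incorrectly: if you only peel off $p+1$ points $D=x_0+\cdots+x_p$, the graded pieces of the induced filtration of $\wedge^{p+1}M_L$ are of the form $\wedge^{j}M_{L(-D)}\otimes\mathcal{O}_C(-D'')$, not line bundles $L^{-1}(D')$; to reduce everything to line-bundle cohomology you must filter $M_L$ completely by $h^0(L)-2$ general points, so that the quotients are the $\mathcal{O}_C(-x_i)$ together with $L^{-1}(x_1+\cdots+x_{h^0(L)-2})$, and then verify nonspeciality of each twist by $L^q$. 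This is fixable and the conclusion for $d\ge 2g+1+p$ does come out.

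The genuine gap is part (ii), and you have essentially conceded it yourself: the assertion that a failure of $(N_p)$ in degree $d=2g+p$ can \emph{only} be caused by a hyperelliptic pencil or a $(p+2)$-secant $p$-plane is the whole theorem, and your proposal describes it as a ``delicate'' step to be handled by an unspecified use of duality and the base-point-free pencil trick rather than proving it. Moreover, your degree bookkeeping for the obstruction is off: for $d=2g+p$ and $D$ effective of degree $p+1$ one has $\deg(K_C+D-L)=-1<0$, so $h^1(L(-D))=0$ and \emph{every} such $D$ imposes independent conditions on $|L|$; no obstruction can arise there. The borderline divisors are those of degree $p+2$, for which $K_C+D-L$ has degree zero, so that $D$ fails to impose independent conditions precisely when $L\cong K_C+D$ --- which is exactly the $(p+2)$-secant $p$-plane condition as used later in the paper ($L=K_C+D$, $D\in C_4$, for $p=2$). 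Translating the nonvanishing of $K_{p,2}(C,L)$ (equivalently, via duality, of a suitable $K_{*,0}$ group twisted by $K_C$) into the existence of such a $D$ or of a $g^1_2$ is the substantive content of \cite{GL88} and is not supplied by the proposal; as it stands, part (ii) is unproved.
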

	
	Part (i) is in \cite{mG84}, Thm. 4.a.1. Part (ii) is stated in \cite{GL86}, 3.3, with proof postponed to future publication. It is proved in \cite{GL88}, Thm. 2.  
	
	\medskip
	
	Given a projective scheme $X \subset \bP^r$ we let $\mathrm{Syz}_2(X)$ denote the second syzygy scheme of $X$. We will write $\mathrm{Syz}_2(C)$ after identifying $C=\varphi_L(C)\subset \bP H^0(L)^\vee$. We have the following preliminary result:
	
	\begin{prop}
		Assume that $d \ge 2g+3$. Then $\mathrm{Syz}_2(C)=C$.
	\end{prop}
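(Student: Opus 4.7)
The plan is essentially to observe that this range of degrees is covered directly by combining Theorem~\ref{T:GL} with Proposition~\ref{prop:N2}, with no case analysis required. Concretely, the hypothesis $d \ge 2g+3 = 2g+1+2$ is precisely the Green--Lazarsfeld bound for property $(N_p)$ with $p=2$, so part (i) of Theorem~\ref{T:GL} applies and $L$ satisfies $(N_2)$. In particular $L$ is normally generated and its ideal is generated by quadrics, so $\varphi_L(C) \subset \mathbf{P}H^0(C,L)^\vee$ is projectively normal.

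It remains only to verify the codimension hypothesis of Proposition~\ref{prop:N2}. Since $d \ge 2g+3 > 2g-2$, Riemann--Roch gives $h^0(C,L) = d-g+1$, so $\varphi_L$ embeds $C$ in $\mathbf{P}^{d-g}$. The codimension is $d-g-1 \ge g+2 \ge 2$ (indeed $\ge 2$ for any $g \ge 0$, and in our setting $g \ge 11$). With $(N_2)$ and codimension $\ge 2$ both verified, Proposition~\ref{prop:N2} gives that every quadric in $I_{C,2}$ is involved in a non-trivial linear syzygy of $C$, hence the map $\phi: K_{2,1}(C)\otimes V^\vee \to I_{C,2}$ is surjective and $\mathrm{Syz}_2(C)=C$.

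There is no real obstacle here: the whole statement is a direct application of the two results already in place. The genus hypothesis $g \ge 11$ coming from the main theorem plays no role in this case; the proof works for any genus as long as $d \ge 2g+3$.
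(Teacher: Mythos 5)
Your proof is correct and follows exactly the paper's argument: apply Theorem~\ref{T:GL}(i) with $p=2$ to get property $(N_2)$, then invoke Proposition~\ref{prop:N2}. Your explicit verification of the codimension hypothesis (codimension $d-g-1 \ge g+2 \ge 2$) is a detail the paper leaves implicit, but it is accurate and welcome.
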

	
	\begin{proof}
		By Theorem \ref{T:GL}(i), $L$ satisfies $(N_2)$. Then apply   
		Proposition \ref{prop:N2}.
	\end{proof}
	
	The proposition shows that, for what concerns $\mathrm{Syz}_2(C)$,  the only interesting cases arise for $d \le 2g+2$. We will start considering the case $d=2g+2$.

	\begin{prop}
		\label{prop:non4-secant}
		Assume that $d = 2g+2$, and $L-K_C$ is not effective. Then $\mathrm{Syz}_2(C)=C$.
	\end{prop}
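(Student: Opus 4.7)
The plan is to reduce immediately to Proposition \ref{prop:N2} by verifying that $L$ satisfies property $(N_2)$, and for that I would invoke the Green--Lazarsfeld theorem, Theorem \ref{T:GL}(ii), with $p=2$. Since the exceptions allowed in that result are exactly ``$C$ hyperelliptic'' and ``$C$ has a $4$--secant $2$--plane'', the only real work is to show that the hypothesis $L-K_C\not\in|\cO_C|$ excludes the second exception (the hyperelliptic exception being excluded by the context in which this proposition is applied in the proof of the main theorem).

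The key point is the dictionary between the effectivity of $L-K_C$ and the existence of a $4$--secant $2$--plane to $\varphi_L(C)\subset\bP H^0(L)^\vee$. Concretely, an effective divisor $D=P_1+\cdots+P_4\in C_4$ spans a $2$--plane (instead of a $3$--plane) precisely when $h^0(L(-D))\ge h^0(L)-3=g$. Writing Riemann--Roch for $L(-D)$, whose degree is $2g-2$, one gets
\[
h^0(L(-D))=g-1+h^1(L(-D))=g-1+h^0(K_C-L+D),
\]
so a $4$--secant $2$--plane exists iff $h^0(K_C-L+D)\ge 1$. But $K_C-L+D$ has degree $0$, hence this inequality forces $K_C-L+D\cong\cO_C$, i.e.\ $L\sim K_C+D$. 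Equivalently, $L-K_C$ is effective. Therefore the hypothesis $L-K_C\not\in|\cO_C|$ is exactly the condition that $\varphi_L(C)$ has no $4$--secant $2$--plane.

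With this translation in hand, Theorem \ref{T:GL}(ii) yields that $L$ satisfies $(N_2)$, and Proposition \ref{prop:N2} immediately gives $\Syz_2(C)=C$. The only step requiring attention is the Riemann--Roch computation above; everything else is a direct citation. There is no serious obstacle in this proof since, in the regime $d=2g+2$ and $L-K_C$ non--effective, the Green--Lazarsfeld theorem delivers the required $(N_2)$ essentially for free.
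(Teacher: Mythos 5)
Your proposal is correct and follows exactly the paper's route: invoke Theorem \ref{T:GL}(ii) with $p=2$ to get $(N_2)$ and then apply Proposition \ref{prop:N2}; your Riemann--Roch computation merely makes explicit the equivalence between ``$L-K_C$ effective'' and ``$\varphi_L(C)$ has a $4$--secant $2$--plane'' that the paper takes for granted. Note that both you and the paper pass silently over the hyperelliptic exception in Green--Lazarsfeld (which is genuinely not excluded by the stated hypotheses --- cf.\ Remark \ref{rmk:hyperelliptic}, where $\Syz_2(C)$ is a scroll), so your parenthetical acknowledgment of that point is, if anything, more careful than the paper's own two-line proof.
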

	
	\begin{proof}
		By Theorem \ref{T:GL}(ii), $L$ satisfies $(N_2)$. Then apply Proposition \ref{prop:N2}.
	\end{proof}

	\begin{rem}
		\label{rmk:hyperelliptic}
		In the hyperelliptic case, the curve satisfies $(N_1)$ and fails $(N_2)$, by Green--Lazarsfeld's result. In this case, the surface $S$ spanned by the chords of $C$ defined by the divisors of the hyperelliptic pencil is a scroll, and hence it is a surface of (minimal) degree $g+1$.  Since
		$$
		h^0(\bP, \c I_C(2))=h^0(\bP,\c I_S(2))+1 
		$$
		we see that $C = S \cap Q$, where $Q\in I_{C,2}\setminus I_{S,2}$.  Moreover, since the ideal of $C$ is generated by quadratic forms, we obtain the exact sequence
		\[
		0\to K_{2,1}(C)\to I_{C,2}\otimes V\to I_{C,3}\to 0
		\]
		and hence, taking into account that $C$ is projectively normal, we compute 
		\[
		\mathrm{dim}(K_{2,1}(C))=\frac{g(g^2-1)}{2}=\mathrm{dim}(K_{2,1}(S)),
		\]
		which implies that $Q$ cannot be involved in any linear syzygy.
		Therefore, from Lemma \ref{lem:Syz2-notX}, it follows that
		$$
		\mathrm{Syz}_2(C) = \mathrm{Syz}_2(S)=S.
		$$
		
		In contrast to this situation, in the cases analyzed in Sections \ref{sec:4-secant} and \ref{sec:excepted}, we will find that $\Syz_2(C)=C$.
	\end{rem}
	
	
	\section{Non--bielliptic curves of degree $2g+2$  with a 4--secant 2--plane}
	\label{sec:4-secant}
	
	We analyze in this section the following situation. Consider $C$ a smooth curve projective of genus $g \ge 11$, and Cliff$(C) \ge 2$ embedded in $\bP:=\bP^{g+2}$ by a line bundle $L$ of degree $d=2g+2$. Contrary to Proposition \ref{prop:non4-secant}, we assume next that the embedding has a 4--secant 2--plane $\Pi$, i.e. $L=K_C+D$ where $D=C\cap \Pi$ is an effective divisor of degree four. Denote $V=H^0(C,L)$. The 2--plane $\Pi$ is defined as $\Pi=\bP W^\vee$, where $W$ is given by the exact sequence
	\[
	0\to H^0(C,K_C)\to V\to W\to 0.
	\]

	Put $X=C\cup \Pi$.
	
	\medskip

	\begin{figure}[h!]
		\centering
		\begin{tikzpicture}
			
			
			\draw [red, thick, xshift=4cm] (5.6,-0.2) node {$C$};
			\draw [xshift=4cm] (4.3,-0.6) node {$\Pi$};

			\draw [gray!50, fill=gray!10, xshift=4cm]  (-1,0) -- (3,-1) -- (6,0.5) -- (2.5,1)  -- cycle;
			
			\draw [red, thick, xshift=4cm] (0.3,0) node {$x_1$};
			\draw [red, thick, xshift=4cm] (1.8,0.4) node {$x_2$};
			\draw [red, thick, xshift=4cm] (3.3,0) node {$x_3$};
			\draw [red, thick, xshift=4cm] (4.8,0.4) node {$x_4$};
			
			\draw [red, thick, xshift=4cm] plot [smooth] coordinates {(0,0) (1,1) (1.6,1) (2,0.5)};
			\draw [red, dashed, xshift=4cm] plot [smooth] coordinates {(0,0) (-0.17,-0.2)};
			\draw [red, thick, xshift=4cm] plot [smooth] coordinates {(-0.17,-0.2) (-0.45,-0.5)};
			
			\draw [red, thick, xshift=4cm] plot [smooth] coordinates {(3,0) (4,1) (4.6,1) (5,0.5)};
			\draw [red, dashed, xshift=4cm] plot [smooth] coordinates {(2,0.5) (2.5,-0.25) (3,0)};
			
			\draw [red, dashed, xshift=4cm] plot [smooth] coordinates {(5,0.5) (5.2,0.1)};
			\draw [red, thick, xshift=4cm] plot [smooth] coordinates {(5.2,0.1) (5.3,-0.2)};
			
			\filldraw [red, thick, xshift=4cm] (0,0) circle (0.5pt); 
			\filldraw [red, thick, xshift=4cm] (2,0.5) circle (0.5pt); 
			\filldraw [red, thick, xshift=4cm] (3,0) circle (0.5pt); 
			\filldraw [red, thick, xshift=4cm] (5,0.5) circle (0.5pt); 
			
			
		\end{tikzpicture}
		\caption{The variety $X$ when $D=x_1+x_2+x_3+x_4$ is reduced.} \label{fig:X}
	\end{figure}
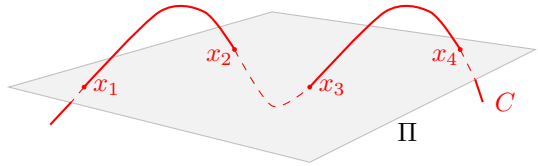

	\medskip
	
	We will use the following easy Lemma.
	
	\begin{lemma}
		\label{lem:4PointsInPi}
		We have $h^0(\Pi,\c I_{D/\Pi}(1))=0$, $h^0(\Pi,\c I_{D/\Pi}(2))=2$ and $h^1(\Pi,\c I_{D/\Pi}(2))=0$.
	\end{lemma}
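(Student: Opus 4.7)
The plan is to exploit, for $k=1,2$, the short exact sequence
\[
0 \to \cI_{D/\Pi}(k) \to \cO_\Pi(k) \to \cO_D(k) \to 0
\]
and read off all three cohomological assertions from the resulting long exact sequence in cohomology. Here $\Pi \cong \bP^2$ gives $h^0(\cO_\Pi(1))=3$, $h^0(\cO_\Pi(2))=6$ and $h^1(\cO_\Pi(k))=0$, while $D$ being zero-dimensional of length $4$ gives $\cO_\Pi(k)|_D \cong \cO_D$ and hence $h^0(\cO_D(k))=4$.

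First I would check that $D$ spans $\Pi$, i.e.\ no line of $\Pi$ contains $D$. The 2-plane $\Pi = \bP W^\vee$ is defined by the sequence $0 \to H^0(K_C) \to V \to W \to 0$, which realizes $W$ as the 3-dimensional image of $V$ in $H^0(L|_D) \cong \bC^4$; dualizing, the surjection $H^0(\cO_D)^\vee \twoheadrightarrow W^\vee$ sends the evaluations at the scheme-theoretic points of $D$ to generators of $W^\vee$. Equivalently, the restriction $H^0(\cO_\Pi(1)) \to H^0(\cO_D(1))$ is injective, i.e.\ $h^0(\cI_{D/\Pi}(1))=0$, which is the first assertion.

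For $k=2$, the remaining two assertions are together equivalent to the surjectivity of the restriction map $H^0(\cO_\Pi(2)) \to H^0(\cO_D(2))$: granting this, the long exact sequence combined with $h^1(\cO_\Pi(2))=0$ yields $h^0(\cI_{D/\Pi}(2))=6-4=2$ and $h^1(\cI_{D/\Pi}(2))=0$.

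The main step is therefore to show that $D$ imposes $4$ independent conditions on conics of $\Pi$. The clean way is to invoke the classical fact that a zero-dimensional subscheme of $\bP^2$ of length at most $5$ imposes independent conditions on conics provided it is not contained in a line; both hypotheses hold for $D$ by the previous step. A self-contained argument would, for each length-$1$ subscheme $Z \subset D$, exhibit a reducible conic (a product of two suitably chosen lines) vanishing on the residual scheme $D-Z$ but not on $Z$, producing $4$ linearly independent classes in $H^0(\cO_D(2))$. The only mild technical point is handling the possibly non-reduced structures on $D$, but since $\deg D = 4$ and $D$ is not contained in a line, only a short list of scheme-theoretic types arises and each is disposed of by an explicit choice of lines.
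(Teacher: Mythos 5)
Your proof is correct, but it takes a genuinely different route from the paper's for the degree--two assertions. For $h^0(\Pi,\c I_{D/\Pi}(1))=0$ the two arguments are essentially the same observation in different clothing: the kernel of the evaluation $V=H^0(L)\to H^0(L|_D)$ is $H^0(L-D)=H^0(K_C)$, which is exactly the kernel of $V\to W$, so $W=H^0(\cO_\Pi(1))$ injects into $H^0(\cO_D(1))$. For $k=2$, however, you work entirely inside $\Pi\cong\bP^2$ and reduce everything to the classical statement that a length--$4$ subscheme of the plane not contained in a line imposes independent conditions on conics; the paper instead works in the ambient $\bP^{g+2}$, using $h^1(\c I_{C/\bP}(2))=0$ (projective normality of $C$, from $(N_1)$) together with $h^1(C,2K_C+D)=0$ to get surjectivity of $\Sym^2V\to H^0(\cO_D)$, and then peels off $\c I_{\Pi/\bP}(2)$. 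Your route is more elementary and independent of the Green--Lazarsfeld input, at the price of the planar lemma; that lemma is indeed standard and admits a clean scheme--theoretic proof in degree $4$ (if $h^0(\c I_{D/\Pi}(2))\ge 3$ then the conics through $D$ share a common line $\ell$, and residuation forces $D\subset\ell$; otherwise two of them cut out $D$ as a complete intersection, whose Koszul resolution gives exactly $h^0=2$), which is tidier than the case analysis you sketch. One caveat: as you phrase it, the classical fact is false for length $5$ (four collinear points plus one off the line are not contained in a line yet fail to impose independent conditions); the correct hypothesis is that no degree--$4$ subscheme lies on a line, which for your length--$4$ scheme $D$ coincides with what you verified in the first step, so the application is unaffected.
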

	
	\proof
	From the exact sequence
	\[
	0\to \c I_{C/\bP}\to \c I_{D/\bP}\to \c I_{D/C}\to 0
	\]
	we obtain an isomorphism $H^0(\bP,\c I_{D/\bP}(1)\cong H^0(C,\c I_{D/C}(1))=H^0(C,K_C)$. The exact sequence
	\[
	0\to H^0(\bP,\c I_{\Pi/\bP}(1))\to H^0(\bP,\c I_{D/\bP}(1))\to H^0(\Pi,\c I_{D/\Pi}(1))\to 0
	\]
	implies then that $h^0(\Pi,\c I_{D/\Pi}(1))=0$.
	
	\medskip
	
	From the diagram (using the fact that $h^1(\bP,\c I_{C/\bP}(2))=0$)
	\[
	\xymatrix@R=0.7cm@C=0.5cm{
		&   0 \ar[d] & 0 \ar[d] &&\\
		0 \ar[r] &  H^0(\bP,\c I_{C/\bP}(2))\ar[r]\ar[d] & H^0(\bP,\c I_{D/\bP}(2))  \ar[r]\ar[d] & \ar[r] H^0(C,\c I_{D/C}(2)) & 0\\
		& H^0(\bP,\cO_\bP(2))\ar@{=}[r]\ar[d] & H^0(\bP,\cO_\bP(2)) \ar[d] & & \\
		& H^0(C,\cO_C(2)) \ar[d]\ar[r] & H^0(D,\cO_D) &&\\
		& 0 &&&
	}
	\]
	applying the Snake Lemma, 
	we obtain the surjectivity of the map $\Sym^2V\to H^0(D,\cO_D)$, which implies that $h^0(\bP,\c I_{D/\bP}(2))={g+4\choose 2}-4$ and $h^1(\bP,\c I_{D/\bP}(2))=0$. 
	
	To conclude, use the exact sequence
	\[
	0\to \c I_{\Pi/\bP}(2)\to \c I_{D/\bP}(2)\to \c I_{D/\Pi}(2)\to 0.
	\]
	and the fact that $h^0(\bP,\c I_{\Pi/\bP}(2))={g+4\choose 2}-6$ and $h^1(\bP,\c I_{\Pi/\bP}(2))=0$.
	\endproof

	We state now the main result of the paper:
	
	\begin{thm}
		\label{thm:main}
		Notation as above. If $C$ is not bielliptic, then $\mathrm{Syz}_2(C)=C$.   
	\end{thm}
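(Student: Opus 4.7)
The strategy I propose is to apply Lemma~\ref{lem:Syz2-YZ} with $Y=C$ and $Z=X:=C\cup\Pi$. This requires two things: (1) scheme-theoretic equations cutting $C$ out of $X$ by quadrics in $I_{C,2}$ each involved in a linear syzygy of $C$, and (2) the surjectivity of the map $K_{2,1}(X)\otimes V^\vee\to I_{X,2}$, so that Lemma~\ref{lem:Syz2(X)=X} yields $\Syz_2(X)=X$. The geometric key enabling both is that the projection $\pi\colon \bP^{g+2}\dashrightarrow \bP H^0(K_C)^\vee$ from $\Pi$ restricts on $C$ to the canonical map $C\to \ol C$, since the linear forms on $\bP^{g+2}$ vanishing on $\Pi$ are precisely $H^0(K_C)\subset V$. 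Consequently, every linear syzygy of $\ol C$ pulls back along $\pi$ to a linear syzygy of $X$ involving only the $H^0(K_C)$-variables.

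For ingredient~(1), Lemma~\ref{lem:4PointsInPi} produces a two-dimensional space $H^0(\Pi,\c I_{D/\Pi}(2))$ of conics through $D$ in $\Pi$. A cohomology argument parallel to the proof of that lemma shows that every such conic is the restriction of some quadric in $I_{C,2}\setminus I_{X,2}$, and two such extensions chosen appropriately with respect to $D$ suffice, together with the generators of $I_{X,2}$, to cut $C$ out of $X$ scheme-theoretically. To check that each extension $Q$ is involved in a linear syzygy of $C$, I would multiply $Q$ by a linear form $\ell\in H^0(K_C)$ (so that $\ell$ vanishes on $\Pi$); since $L$ satisfies $(N_1)$ by Green--Lazarsfeld (as $d\ge 2g+2$), we may write $\ell Q=\sum\ell_jQ_j$ with $Q_j\in I_{C,2}$, which yields a linear syzygy of $C$ whose nontriviality follows from the fact that $\ell Q\neq 0$ modulo $I_{X,3}$.

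The technical core, to be carried out in Section~5, is ingredient~(2). The plan is to combine three families of linear syzygies of $X$: first, pullbacks via $\pi$ of linear syzygies of $\ol C$, whose surjectivity onto the subspace of $I_{X,2}$ coming from $I_{\ol C,2}$ is guaranteed under our hypotheses — either by Proposition~\ref{prop:N2} when $\mathrm{Cliff}(C)\ge 3$ (via the relevant case of Green's conjecture), or by Theorem~\ref{thm:ABS} when $\mathrm{Cliff}(C)=2$ so that the non-bielliptic genus-$(\ge 11)$ curve $C$ is forced to be tetragonal; second, syzygies built from the $g$ linear forms defining $\Pi$ and their quadratic consequences in $I_{X,2}$; and third, mixed syzygies involving quadrics that restrict nontrivially to $\Pi$, built in the spirit of~(1).

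The main obstacle will be showing that these three families together span $I_{X,2}$. Filtering $I_{X,2}$ by the degree to which its elements involve variables outside $H^0(K_C)$, the canonical pullbacks handle the bottom piece, but the higher pieces — quadrics mixing $H^0(K_C)$-variables with linear forms from a complement — require explicit mixed syzygies that combine the linear equations of $\Pi$ with the cutting quadrics of~(1) and their analogues. Producing these concrete mixed syzygies, and verifying that every quadric in $I_{X,2}$ is involved in at least one of them, is the bookkeeping that makes Section~5 the technical heart of the paper.
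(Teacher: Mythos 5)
Your global architecture is the paper's: reduce to $X=C\cup\Pi$ via Lemma~\ref{lem:Syz2-YZ}, show the two extra quadrics in $I_{C,2}\setminus I_{X,2}$ are involved in linear syzygies of $C$ by multiplying by a linear form vanishing on $\Pi$, and obtain the surjectivity of $K_{2,1}(X)\otimes V^\vee\to I_{X,2}$ from the canonical-curve case (your observation that one needs Proposition~\ref{prop:N2} for $\mathrm{Cliff}(C)\ge 3$ and Theorem~\ref{thm:ABS} for the tetragonal case is correct and even slightly more careful than the paper's citation). But there are two genuine gaps. The first is in your nontriviality argument for ingredient (1), which is false as stated: since $\ell\in H^0(K_C)$ vanishes on $\Pi$ and $Q\in I_{C,2}$, the product $\ell Q$ vanishes on $C\cup\Pi=X$, so $\ell Q$ \emph{is} zero modulo $I_{X,3}$ --- that is the whole point of multiplying by such an $\ell$. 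Moreover, writing $\ell Q=\sum\ell_jQ_j$ with $Q_j\in I_{C,2}$ merely from $(N_1)$ for $C$ gives no control: the decomposition could be the trivial one $\ell\otimes Q$ itself, and excluding that is equivalent to $Q$ being involved in a syzygy, which is what you are trying to prove. The paper's fix is to first establish that $X$ satisfies $(N_1)$ (its Claim~2, a genuinely nontrivial step using duality and Green's $K_{p,1}$--theorem to kill $K_{1,1}(C,K_C,L)\cong K_{g,1}(C,L)^\vee$); then $\ell Q\in I_{X,3}$ can be rewritten as $\sum\ell_iQ_i$ with all $Q_i\in I_{X,2}$, and the syzygy $\ell\otimes Q-\sum\ell_i\otimes Q_i$ is automatically nonzero because $Q\notin I_{X,2}$. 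Your proposal contains no substitute for this step.

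The second gap is that ingredient (2), the surjectivity of $K_{2,1}(X)\otimes V^\vee\to I_{X,2}$, is only announced as a plan, and the plan's specifics do not hold up: every quadric in $I_{X,2}$ vanishes on $\Pi\subset X$, so your third family of ``quadrics that restrict nontrivially to $\Pi$'' is empty, and the ``quadratic consequences'' of the $g$ linear forms cutting out $\Pi$ do not lie in $I_{C,2}$. What the paper actually does is introduce the cone $T$ over the canonical model with vertex $\Pi$ (the closure of the fibres of your projection over $\overline{C}$), so that $C\subset X\subset T$, and apply Lemma~\ref{lem:Syz2-YZ} a second time to the pair $X\subset T$. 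This reduces the problem to showing that a basis of the complement $H^0(\mathcal{I}_{X/T}(2))$ is involved in syzygies; that complement is identified inside $H^0(K_C)\otimes W$, so its elements are $q=\sum_{i=g}^{g+2}\ell_ix_i$ with $\ell_i\in H^0(K_C)$, for which the explicit syzygies $q\otimes\ell-\sum_i(\ell x_i)\otimes\ell_i$ with $0\ne\ell\in H^0(K_C-D)$ work; one then lifts from $\ker(\mu_{X/T})$ to $K_{2,1}(X)$ using the surjectivity of $\mu_T$, which comes from Petri's theorem since $\mathrm{Cliff}(C)\ge 2$. Your filtration idea points in this direction, but without the cone $T$, the identification of the complement, and the explicit syzygies and lifting, the technical core of the theorem remains unproved.
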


	\begin{proof}
		The line bundle $L$ embeds $C \subset \bP^{g+2}=:\bP$ as a curve of degree $2g+2$ and satisfies $(N_1)$ by Theorem \ref{T:GL}(i) for $p=1$.  Therefore
		\[
		h^0(\bP, \c I_{C/\bP}(2))= \binom{g+1}{2}+1
		\]

		{\bf Claim 1:}  $H^1(\c I_{X/\bP}(k))=0$ for all $k \ge 1$, i.e. $\cO_X(1)$  satisfies $(N_0)$.
		
		\medskip
		
		Using the exact sequence
		\[
		\xymatrix{
			0 \longrightarrow \c I_{X/\bP}(k) \longrightarrow \c I_{C/\bP}(k) \ar[r]^-{\beta_k} & \c I_{D/\Pi}(k)\longrightarrow 0}
		\]
		and the fact that $H^1(\c I_{C/\bP}(k))=0$ for all $k\ge 0$, it suffices to prove that  $H^0(\beta_k)$ is surjective for all $k \ge 0$.  This is trivially true for $k=0$, and it is true for $k=1$ from Lemma \ref{lem:4PointsInPi}.
		For $k=2$, we have $h^0(\c I_{D/\Pi}(2))=2$, from Lemma \ref{lem:4PointsInPi}. If $H^0(\beta_2)=0$, then any quadratic form in $I_{C,2}$ vanishes on $\Pi$ which is in contradiction with the fact that the ideal of $C$ is generated in degree two. Similarly, if the image of $H^0(\beta_2)$ is one--dimensional and hence it defines a conic in $\Pi$, any quadratic form in $I_{C,2}$ would vanish on this conic. Hence $H^0(\beta_2)$ is surjective. If $k\ge 3$ we proceed by induction on $k$ using the following commutative diagram and the fact that both $\mathcal I_{C/\mathbb P}$ and $\mathcal I_{D/\Pi}$ are generated by quadrics: 
		\begin{equation}\label{E:exseqX}
			\xymatrix@C=2cm{
				H^0(\c I_{C/\bP}(k))\ar[r]^-{H^0(\beta_k)}&H^0(\c I_{D/\Pi}(k)\\
				H^0(\c I_{C/\bP}(k-1))\otimes V\ar[r]^{H^0(\beta_{k-1})\otimes \mathrm{id}_V}\ar[u]& H^0(\c I_{D/\Pi}(k-1))\otimes V\ar[u]
			}
		\end{equation}
		
		It follows also that:
		\[
		h^0(\bP,\c I_{X/\bP}(2))=\binom{g+1}{2}-1=h^0(\bP,\c I_{C/\bP}(2))-2,
		\]
		in particular, $I_{C,2}$ contains two extra linearly independent quadratic forms that do not belong to $I_{X,2}$.
		
		\medskip
		
		{\bf Claim 2:} $\cO_X(1)$ satisfies $(N_1)$.
		
		\medskip
		
		Consider the following diagram obtained using the equality $\c I_{\Pi/X}=\c I_{D/C}$. 
		\[
		\xymatrix@C=0.5cm{
			&0\ar[d]&0\ar[d]&0\ar[d]\\
			0\ar[r]&K_{2,1}(X)\ar[r]\ar[d]&\ker(\mu_\Pi)\ar[d]\ar[r]^-\varphi &\ker(\delta)\ar[d]\\
			0\ar[r]& H^0(\c I_{X/\bP}(2)) \otimes V \ar[d]_-{\mu_X}\ar[r]&H^0(\c I_{\Pi/\bP}(2)) \otimes V \ar[d]_-{\mu_\Pi}\ar[r]&H^0(C, 2K_C+D) \otimes V \ar[d]_-\delta\ar[r]&0\\
			0\ar[r]&H^0(\c I_{X/\bP}(3))\ar[d]\ar[r]&H^0(\c I_{\Pi/\bP}(3))\ar[d]\ar[r]&H^0(C,3K_C+2D)\ar[r]\ar[d]&0\\
			&K_{1,2}(X)&0&0
		}
		\]
		The vanising of $K_{1,2}(X)$ is equivalent to the surjectivity of the map $\varphi:\ker(\mu_\Pi)\to \ker(\delta)$.
		
		The last two vertical sequences arise from a morphism between Koszul complexes:
		$$
		\xymatrix{
			H^0(\c I_{\Pi/\bP}(1))\otimes \wedge^2V\ar[r]\ar[d]&H^0(\c I_{\Pi/\bP}(2))\otimes V\ar[r]\ar[d]&H^0(\c I_{\Pi/\bP}(3))\ar[d]\\
			H^0(C,K_C)\otimes\wedge^2V\ar[r]& H^0(C,2K_C+D) \otimes V\ar[r]^\delta & H^0(C,3K_C+2D)
		}
		$$

		Note that  the cohomology at the middle of the Koszul complex
		\[
		H^0(C,K_C)\otimes\wedge^2V\to H^0(C,2K_C+D) \otimes V\stackrel{\delta}{\to} H^0(C,3K_C+2D)
		\]
		equals $K_{1,1}(C,K_C,L)$. By duality \cite{mG84}, it is isomorphic to $K_{g,1}(C,L)^\vee$ and, since the curve is non--hyperelliptic, the latter is zero, by M. Green's $K_{p,1}$--Theorem \cite{mG84}. Therefore, 
		\[
		\ker(\delta)=\mathrm{Im}\left(H^0(C,K_C)\otimes\wedge^2V\to H^0(C,2K_C+D) \otimes V\right)
		\]
		Also note that the vertical map $H^0(\c I_{\Pi/\bP}(1))\otimes \wedge^2V\to H^0(C,K_C)\otimes\wedge^2V$ is an isomorphism. In conclusion, the induced map from
		$
		\mathrm{Im}(H^0(\c I_{\Pi/\bP}(1))\otimes \wedge^2V\to H^0(\c I_{\Pi/\bP}(2))\otimes V)
		$ 
		to  
		$
		\mathrm{Im}(H^0(C,K_C)\otimes\wedge^2V\to H^0(C,2K_C+D) \otimes V)=\ker(\delta)
		$
		is surjective, and hence  the map  $\varphi$ is surjective as well.

		One can   prove similarly  that $K_{1,q}(X)=0$ for $g \ge 3$.  Claim 2 is proved.
		
		\medskip
		
		{\bf Claim 3:} Any quadratic form $Q\in H^0(\c I_{C/\bP}(2))\setminus H^0(\c I_{X/\bP}(2))$ is  linearly related with $I_{X,2}$.

		\medskip
		
		From Claim 2, we know that $K_{1,2}(X)=0$. Consider the following diagram derived from \eqref{E:exseqX} for $k=3$:
		
		\[
		\xymatrix@C=0.5cm{
			&0\ar[d]&0\ar[d]&0\ar[d]\\
			0\ar[r]&K_{2,1}(X)\ar[r]\ar[d]&K_{2,1}(C)\ar[d]\ar[r]^-\lambda&\ker({\mu_{D/\Pi}})\ar[d]\\
			0\ar[r]&H^0(\c I_{X/\bP}(2))\otimes V \ar[d]_-{\mu_X}\ar[r]&H^0(\c I_{C/\bP}(2))\otimes V \ar[d]_-{\mu_C}\ar[r]&H^0(\c I_{D/\Pi}(2))\otimes V\ar[d]_-{\mu_{D/\Pi}}\ar[r]&0\\
			0\ar[r]&H^0(\c I_{X/\bP}(3))\ar[d]\ar[r]&H^0(\c I_{C/\bP}(3))\ar[d]\ar[r]&H^0(\c I_{D/\Pi} (3))\ar[r]\ar[d]&0\\
			&K_{1,2}(X)=0&0&0
		}
		\]

		Let $0\ne \ell\in H^0(\bP, \c I_{\Pi/\bP}(1))$. From the surjectivity of the multiplication map $H^0(\c I_{X/\bP}(2))\otimes V\to H^0(\c I_{X/\bP}(3))$ it follows that there are quadratic forms $Q_i\in I_{X,2}$ and linear forms $\ell_i$, such that $Q\cdot \ell=\sum Q_i\cdot \ell_i$ which means that $Q$ is linearly related with $I_{X,2}$. 
		
		In particular, if $Q_1,Q_2\in H^0(\c I_{C/\bP}(2))\setminus H^0(\c I_{X/\bP}(2))$ are two linearly independent quadratic forms, then $Q_1,Q_2$  are linearly related with $I_{X,2}$. 
		
		\medskip	
		
		In view of Lemma \ref{lem:Syz2-YZ}, we are reduced to showing the following:
		
		\begin{thm}\label{T:syzX}
			Notation as above. The map $K_{2,1}(X)\otimes V^\vee\to I_{X,2}$ is surjective.
		\end{thm}
		
		Taking Theorem \ref{T:syzX} for granted, the proof of Theorem \ref{T:GL} is now complete.
	\end{proof}
	
	
	\section{Proof of Theorem \ref{T:syzX}}
	
	Let $T$ be the cone over $\varphi_K(C)\subset \bP^{g-1}$ with vertex $\Pi$. We have inclusions:
	\[
	C \subset X \subset T.
	\]
	
	Since $K_{2,1}(\varphi_K(C))\otimes H^0(K_C)^\vee \to I_{\varphi_K(C),2}$ is surjective by Theorem \ref{thm:ABS}, and $I_{\varphi_K(C),2}=I_{T,2}$ we also have the surjectivity of the map:
	\[
	K_{2,1}(T)\otimes V^\vee \to I_{T,2}.
	\]
	
	Therefore, using Lemma \ref{lem:Syz2-YZ} it will suffice to prove the following statement:
	
	\medskip
	
	\noindent
	\emph{$(*)$   Every quadratic form in (a given basis of) the algebraic complement of the subspace $H^0(\c I_{T/\bP}(2))$  in $H^0(\c I_{X/\bP}(2))$ is involved in a linear syzygy of $X$.
	}
	
	\medskip
	
	We record the following useful fact.

	\begin{lemma}
		The exact sequence:
		$$
		\xymatrix{
			0\ar[r]&\c I_{T/\bP}(k)\ar[r]&\c I_{X/\bP}(k)\ar[r]&\c I_{X/T}(k)\ar[r]& 0
		}
		$$
		is exact on global sections for all $k$. In particular:
		$$
		h^0(T,\c I_{X/T}(2))=3g-4= h^0(\c I_{X/\bP}(2)) - h^0(\c I_{T/\bP}(2))
		$$.
	\end{lemma}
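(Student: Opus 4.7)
The plan is to reduce the exactness on global sections to the single cohomology vanishing $H^1(\bP,\c I_{T/\bP}(k))=0$ for all $k\ge 0$. From the long exact sequence in cohomology attached to
\[
0\to \c I_{T/\bP}(k)\to \c I_{X/\bP}(k)\to \c I_{X/T}(k)\to 0,
\]
this vanishing immediately yields the desired surjectivity of $H^0(\c I_{X/\bP}(k))\to H^0(T,\c I_{X/T}(k))$.

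The vanishing $H^1(\bP,\c I_{T/\bP}(k))=0$ is in turn equivalent, via the short exact sequence $0\to \c I_{T/\bP}(k)\to \cO_\bP(k)\to \cO_T(k)\to 0$ and the vanishing $H^1(\bP,\cO_\bP(k))=0$, to the projective normality of $T$. To establish this, I would combine two ingredients. First, the hypothesis $\mathrm{Cliff}(C)\ge 2$ forces $C$ to be non--hyperelliptic, so the canonical image $\varphi_K(C)\subset \bP^{g-1}$ is projectively normal by Max Noether's theorem. Second, writing $x_0,\dots,x_{g-1}$ for linear coordinates on $\bP^{g-1}$ and $y_0,y_1,y_2$ for coordinates on $\Pi$, the homogeneous ideal of $T\subset\bP^{g+2}$ is the extension to $\bC[x,y]$ of the homogeneous ideal $I$ of $\varphi_K(C)\subset \bC[x]$, so the homogeneous coordinate ring of $T$ is the polynomial ring $R[y_0,y_1,y_2]$ with $R=\bC[x]/I$. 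Since projective normality of the curve $\varphi_K(C)$ makes $R$ Cohen--Macaulay, its polynomial extension $R[y_0,y_1,y_2]$ is Cohen--Macaulay as well, and hence $T$ is projectively normal. I expect this step---the projective normality of the cone---to be the only nontrivial input, though it is standard.

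For the numerical statement at $k=2$, I would simply subtract known dimensions. Claim 1 in the proof of Theorem \ref{thm:main} records $h^0(\c I_{X/\bP}(2))=\binom{g+1}{2}-1$, while projective normality of $T$ together with the non--degeneracy of $\varphi_K(C)$ gives
\[
h^0(\c I_{T/\bP}(2))=\dim(I_{\varphi_K(C)})_2 = \binom{g+1}{2}-(3g-3)=\binom{g-2}{2},
\]
the middle equality using Noether's theorem and $h^0(2K_C)=3g-3$ from Riemann--Roch. Subtraction then yields
\[
h^0(T,\c I_{X/T}(2))=\binom{g+1}{2}-1-\binom{g-2}{2}=3g-4,
\]
as asserted.
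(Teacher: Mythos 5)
Your proposal is correct and follows essentially the same route as the paper, whose entire proof is the one--line observation that $C$ satisfies $(N_1)$ in the canonical embedding and hence the cone $T$ does too; your reduction to $H^1(\bP,\c I_{T/\bP}(k))=0$, the Cohen--Macaulay argument for the projective normality of the cone, and the dimension count $\binom{g+1}{2}-1-\binom{g-2}{2}=3g-4$ are exactly the details the paper leaves implicit.
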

	
	\begin{proof}
		The curve $C$ satisfies $(N_1)$ in the canonical embedding, and hence $T\subset\bP$ satisfies $(N_1)$ as well. 
	\end{proof}  
	
	Consider the diagram with exact rows:
	\[
	\xymatrix@C=0.5cm{
		0\ar[r] &	H^0(\c I_{T/\bP}(2))\otimes V\ar[r]\ar[d]^-{\mu_T}&H^0(\c I_{X/\bP}(2))\otimes V\ar[r]\ar[d]^-{\mu_X}&H^0(\c I_{X/T}(2))\otimes V\ar[d]^-{\mu_{X/T}}\ar[r] & 0\\
		0\ar[r] &	H^0(\c I_{T/\bP}(3))\ar[r]&H^0(\c I_{X/\bP}(3))\ar[r]&H^0(\c I_{X/T}(3))\ar[r] &0
	}
	\]
	
	
	Since ${\mu_T}$ is surjective, the map 
	\[
	\ker({\mu_X}) \longrightarrow \ker({\mu_{X/T}})
	\]
	is surjective as well. 
	\medskip
	
	We observe that, in order to prove $(*)$ it suffices to prove that we can find a basis consisting of elements $Q$ in the algebraic complement of $H^0(\c I_{T/\bP}(2))$ in $H^0(\c I_{X/\bP}(2))$, with the property that the image $q\in H^0(\c I_{X/T}(2))$ is involved in a non-trivial element of $\ker({\mu_{X/T}})$.  Indeed, let $q\otimes \ell+\sum q_i\otimes \ell_i$ be a non--zero element of $\ker({\mu_{X/T}})$. For every $i$, choose a lifting $Q_i\in H^0(\c I_{X/\bP}(2))$ of $q_i$. It is immediate that 
	\[
	{\mu_X}(Q\otimes \ell+\sum Q_i\otimes \ell_i)\in H^0(\c I_{T/\bP}(3)).
	\]
	
	Since the Clifford index of the curve equals two, it follows by Petri's Theorem that the multiplication map 
	\[
	H^0(\c I_{C/\bP^{g-1}}(2))\otimes H^0(C,K_C)\to H^0(\c I_{C/\bP^{g-1}}(3))
	\]
	is surjective. On the other hand, the homogeneous ideal of $T$ is the extension of the homogeneous ideal of the canonical curve in the larger polynomial ring, and hence the multiplication map ${\mu_T}$ is surjective, too. In particular, we obtain that there exist elements $Q'_j\in H^0(\c I_{T/\bP}(2))$ and linear forms $\ell'_j$ such that 
	\[
	{\mu_X}\left(Q\otimes \ell+\sum Q_i\otimes \ell_i\right)={\mu_T}\left(\sum Q'_j\otimes \ell'_j\right),
	\]
	i.e. $Q\otimes \ell+\sum Q_i\otimes \ell_i-\sum Q'_j\otimes \ell'_j\in \ker({\mu_X})=K_{2,1}(X)$. On the other hand, this element is non-zero, since it is a lift of the non--zero element $q\otimes \ell+\sum q_i\otimes \ell_i$, which proves that $Q$ is indeed involved in a linear syzygy.
	
	\medskip
	
	In conclusion, in order to complete the proof, we show that every $q\in H^0(\c I_{X/T}(2))$ in a particular basis is involved in a non-trivial element of $\ker({\mu_{X/T}})$. We will find a natural subspace of $H^0(\c I_{X/T}(2))$, and we will prove this claim for elements in this particular subspace, and then for elements in an algebraic complement. To this end, we need some preparations.

	We start with the exact sequences of ideal sheaves:
	\begin{equation}\label{E:ideals}
		\xymatrix{&&0\ar[d]\\
			0\ar[r]&\c I_{T/\bP}\ar[r]&\c I_{X/\bP}\ar[r]\ar[d]&\c I_{X/T}\ar[r]& 0 \\
			&&\c I_{C/\bP}\ar[d]\\
			&&\c I_{D/\Pi}\ar[d]\\
			&&0
		}   
	\end{equation}
	Recall that the plane $\Pi$ was characterized by $\Pi = \bP W^\vee$, where $W$ is the 3--dimensional vector space defined by the exact sequence:
	\begin{equation}
		\label{eqn:W}
		0 \longrightarrow H^0(C,K_C) \longrightarrow H^0(C,L) \longrightarrow W \longrightarrow 0.
	\end{equation}
	
	Consider the following diagram with exact rows (cf. \cite{ABW}, Section V):
	\begin{equation}\label{E:long}
		\xymatrix@C=0.6cm{
			\bigwedge^2H^0(K_C)\ar[r]\ar[dr]^-0&H^0(K_C)\otimes H^0(L)\ar[r]^-u\ar[d]&\Sym^2H^0(L)\ar[r]^-v\ar[d]&\Sym^2W\ar[d]\ar[r]& 0\\
			0\ar[r]&H^0(2K_C+D)\ar[r]&H^0(2L)\ar[r]&H^0(\cO_D(2L))\ar[r]& 0
		}    
	\end{equation}
	and let  $U := \mathrm{Im} (u)=\ker(v)$.  The elements in $U$ are quadratic forms $Q\in \Sym^2V$ that can be written as $Q=\sum \ell_i\ell'_i$
	with $\ell_i,\ell'_i\in V$, such that all $\ell_i$ vanish at $D$ and hence vanish on $\Pi$.
	
	We obtain a map naturally induced by restriction to $C$
	\[
	m: U \longrightarrow H^0(2K_C+D).
	\]
	
	\begin{lemma}\label{L:kerm}
		We have
		\[
		\ker(m) = H^0(\bP,\c I_{X/\bP}(2)).
		\]
	\end{lemma}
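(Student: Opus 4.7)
The plan is to identify $U$ intrinsically as the degree--$2$ part of the homogeneous ideal of $\Pi$, and then chase what it means for such a quadric to additionally vanish on $C$; no further geometric input beyond (\ref{eqn:W}) should be needed.

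First, I would exploit (\ref{eqn:W}): the subspace $H^0(K_C) \subset V$ consists exactly of the linear forms on $\bP V^\vee$ vanishing on $\Pi = \bP W^\vee$, so the kernel of the surjection $v: \Sym^2 V \to \Sym^2 W$ is precisely the degree--$2$ piece of the homogeneous ideal of $\Pi$. By the exactness of the top row of (\ref{E:long}), this would yield
\[
U = \Im(u) = \ker(v) = H^0(\bP, \c I_{\Pi/\bP}(2)).
\]

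Second, I would interpret $m$ concretely. For $Q \in U$, the restriction $Q|_C$ lies in $H^0(C, 2L) = H^0(C, 2K_C+2D)$; since $Q$ vanishes on $\Pi$, it vanishes at the four points of $D = C \cap \Pi$, so $Q|_C$ actually lies in $H^0(C, 2K_C+D)$, and by construction this section is $m(Q)$. Hence $m(Q) = 0$ if and only if $Q|_C = 0$, that is, $Q \in H^0(\bP, \c I_{C/\bP}(2))$. Combined with the first step,
\[
\ker(m) = H^0(\bP, \c I_{C/\bP}(2)) \cap H^0(\bP, \c I_{\Pi/\bP}(2)),
\]
and the scheme-theoretic equality $\c I_{X/\bP} = \c I_{C/\bP} \cap \c I_{\Pi/\bP}$ built into the definition of the union $X = C \cup \Pi$ (and encoded by the vertical sequence in (\ref{E:ideals})) would identify the right-hand side with $H^0(\bP, \c I_{X/\bP}(2))$, which is the claim.

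I do not expect a real obstacle here: the lemma is a direct unfolding of definitions once one reads (\ref{eqn:W}) as the statement that $H^0(K_C)$ cuts out $\Pi$. The only point requiring brief justification is the identification $\c I_{X/\bP} = \c I_{C/\bP} \cap \c I_{\Pi/\bP}$, which is simply the standard scheme structure on the scheme-theoretic union of two closed subschemes and is already used implicitly throughout the section.
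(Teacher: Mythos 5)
Your proof is correct and takes essentially the same route as the paper: both are a direct unwinding of diagram \eqref{E:long} using the fact that $X=C\cup\Pi$. The paper concludes via the kernel sequence of the map of rows together with $0\to\c I_{X/\bP}(2)\to\c I_{C/\bP}(2)\to\c I_{D/\Pi}(2)\to 0$, while you identify $U=H^0(\bP,\c I_{\Pi/\bP}(2))$ and use $\c I_{X/\bP}=\c I_{C/\bP}\cap\c I_{\Pi/\bP}$; these amount to the same computation.
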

	
	\begin{proof}
		Diagram \eqref{E:long} induces the following one:
		\[
		\xymatrix{
			0\ar[r]& U \ar[r]\ar[d]^-m&\Sym^2H^0(L)\ar[r]^-v\ar[d]^-a&\Sym^2W\ar[d]^-b\ar[r]& 0\\
			0\ar[r]&H^0(2K_C+D)\ar[r]&H^0(2L)\ar[r]&H^0(\cO_D(2L))\ar[r]& 0
		}
		\]
		Observing that $\ker(a)= H^0(\c I_{C/\bP}(2))$ and $\ker(b)=H^0(\c I_{D/\Pi}(2))$ and comparing with the vertical sequence in \eqref{E:ideals} we conclude. 
	\end{proof}
	
	Now consider the upper left section of diagram \eqref{E:long}. We have:
	$$
	\xymatrix{
		\bigwedge^2H^0(K_C)\ar[r]\ar@{=}[d]&H^0(K_C)\otimes H^0(K_C)\ar[r]\ar[d]&\Sym^2H^0(K_C)\ar[d]\ar[r]&0\\
		\bigwedge^2H^0(K_C)\ar[r]&H^0(K_C)\otimes H^0(L)\ar[r]^-u\ar[d]&U\ar[r]\ar[d]&0\\
		&H^0(K_C)\otimes W\ar[d]\ar@{=}[r]&H^0(K_C)\otimes W\ar[d]\\
		&0&0
	}
	$$
	and therefore from the last column we deduce the following:
	\begin{equation}
		\label{eqn:ker(p)}
		\xymatrix{
			0\ar[r]&\Sym^2H^0(K_C)\ar[d]^-n\ar[r]&U\ar[r]\ar[d]^-m&H^0(K_C)\otimes W\ar[r]\ar[d]^-p&0\\
			0\ar[r]&H^0(2K_C)\ar[r]&H^0(2K_C+D)\ar[r]&H^0(\cO_D(2K_C+D))\ar[r]&0
		}
	\end{equation}
	
	\begin{lemma}\label{L:kerp}
		We have
		\[
		\ker(p) = H^0(\c I_{X/T}(2)).
		\]
	\end{lemma}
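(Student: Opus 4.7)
The plan is to apply the snake lemma to the diagram \eqref{eqn:ker(p)}, using the identifications $\ker(m) = H^0(\bP,\c I_{X/\bP}(2))$ (from Lemma \ref{L:kerm}) and $\ker(n) = H^0(\bP,\c I_{T/\bP}(2))$, together with the surjectivity of $n$. This will produce an exact sequence $0 \to \ker(n) \to \ker(m) \to \ker(p) \to 0$, and the lemma then follows from the preceding one, which guarantees the exactness on global sections of $0 \to \c I_{T/\bP}(2) \to \c I_{X/\bP}(2) \to \c I_{X/T}(2) \to 0$.

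To identify $\ker(n)$, observe that $n$ is the canonical multiplication $\Sym^2 H^0(K_C) \to H^0(2K_C)$, whose kernel is the space $I_{\varphi_K(C),2}$ of canonical quadrics. On the other hand, since $T$ is the cone over $\varphi_K(C) \subset \bP H^0(K_C)^\vee$ with vertex $\Pi$, a standard coordinate computation (choosing bases of $V$ compatible with the decomposition $V = H^0(K_C) \oplus W'$, and viewing any polynomial vanishing on $T$ as a polynomial in the $W$--variables whose coefficients must vanish on the affine cone over $\varphi_K(C)$) shows that the homogeneous ideal of $T$ in $\Sym^\bullet V$ is the extension of the homogeneous ideal of $\varphi_K(C)$ in $\Sym^\bullet H^0(K_C)$; in degree two this gives $H^0(\bP,\c I_{T/\bP}(2)) = I_{\varphi_K(C),2} \subset \Sym^2 H^0(K_C)$, matching $\ker(n)$ via the inclusion $\Sym^2 H^0(K_C) \hookrightarrow U$. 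Next, since $\mathrm{Cliff}(C) \ge 2$ the curve is non--hyperelliptic, so Max Noether's theorem yields the surjectivity of $n$, i.e.\ $\mathrm{coker}(n) = 0$.

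Applying the snake lemma to \eqref{eqn:ker(p)} then produces
\[
0 \to \ker(n) \to \ker(m) \to \ker(p) \to \mathrm{coker}(n) = 0,
\]
whence $\ker(p) \cong H^0(\bP,\c I_{X/\bP}(2))/H^0(\bP,\c I_{T/\bP}(2)) = H^0(T,\c I_{X/T}(2))$, as required. The only delicate ingredient is the identification $\ker(n) = H^0(\bP,\c I_{T/\bP}(2))$, i.e.\ the description of the degree two part of the ideal of the cone $T$; everything else is a standard diagram chase combined with Noether's surjectivity.
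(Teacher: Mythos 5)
Your proof is correct and follows essentially the same route as the paper's (very terse) proof: identify $\ker(n)$ with $H^0(\c I_{T/\bP}(2))$, invoke Lemma \ref{L:kerm}, and compare the kernel sequence of diagram \eqref{eqn:ker(p)} with the global--section sequence of $0\to\c I_{T/\bP}(2)\to\c I_{X/\bP}(2)\to\c I_{X/T}(2)\to 0$. You usefully make explicit the two ingredients the paper leaves implicit, namely that the degree--two part of the homogeneous ideal of the cone $T$ is $I_{\varphi_K(C),2}\subset\Sym^2H^0(K_C)$ and that $\mathrm{coker}(n)=0$ by Max Noether's theorem (valid since $\mathrm{Cliff}(C)\ge 2$ forces $C$ to be non--hyperelliptic), which is what makes $\ker(m)\to\ker(p)$ surjective in the snake lemma.
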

	
	\begin{proof}
		Observing that $\ker(n) = H^0(\c I_{T/\bP}(2))$ and recalling Lemma \ref{L:kerm} we conclude, after comparing with the horizontal sequence in \eqref{E:ideals}.
	\end{proof}

	Consider a basis $x_0\ldots,x_{g+2}$ in $V$ such that $x_0,\ldots,x_{g-1}$ is a basis in $H^0(K_C)\subset V$. The images $w_g,w_{g+1},w_{g+2}$ of $x_g,x_{g+1},x_{g+2}$, respectively, in $W$ form a basis and the 2--plane $\Pi=\bP W^\vee$ is given by the equations $x_0=\cdots=x_{g-1}=0$. Having identified $H^0(\c I_{X/T}(2))$ with a subspace of $H^0(K_C)\otimes W$, any element $q\in H^0(\c I_{X/T}(2))$ can be written as $q=\sum_{i=g}^{g+2} \ell_ix_i|_T$, with $\ell_i\in V$ depending only on the variables $x_0,\ldots,x_{g-1}$. From the diagram (\ref{eqn:ker(p)}) and taking into account that the map $\Sym^2H^0(K_C)\to H^0(2K_C)$ is surjective, we infer that the condition that $q\in \ker(p)$ translates into the existence of a relation of the following type over $C$: 
	\[
	\sum_{i=g}^{g+2} \ell_ix_i|_C=\sum_{i=0}^{g-1}\ell'_ix_i|_C,
	\]
	with $\ell'_i$ linear forms depending only on $x_0,\ldots,x_{g-1}$.
	
	
	\medskip
	
	
	\medskip
	
	We can prove now that the elements of a given a basis in $H^0(\c I_{X/T}(2))$ are involved in elements of $\ker(\mu_{X/T})\subset H^0(\c I_{X/T}(2))\otimes V$, as claimed. 
	
	\medskip
	
	Observe that 
	\[
	H^0(K_C-D)\otimes W \subset \ker(p)=H^0(\c I_{X/T}(2))\subset H^0(K_C)\otimes W
	\]
	Therefore we have the following commutative and exact diagram:
	\[
	\xymatrix@C=0.7cm{
		0\ar[r]&H^0(K_C-D)\otimes W \ar[r]\ar@{=}[d]&H^0(\c I_{X/T}(2))\ar[r]\ar[d]&H\ar[d]\ar[r]&0\\
		0\ar[r]&H^0(K_C-D)\otimes W \ar[r]&H^0(K_C)\otimes W\ar[r]\ar[d]^-p&Z\otimes W\ar[r]\ar[d]^-{p'}&0\\
		&&H^0(\mathcal{O}_D(2K_C+D))\ar@{=}[r]&H^0(\mathcal{O}_D(2K_C+D))}
	\]
	which defines both $H$ and $Z$.  As mentioned earlier, we consider elements of $H^0(K_C-D)\otimes W$ and of $H$ separately. Choosing a splitting, we can see $H$ as a subspace of $H^0(\c I_{X/T}(2))$.
	\medskip
	
	Let $q=\ell x_i|_T \in H^0(K_C-D)\otimes W$ and let $q'=\ell' x_i|_T$ for some  $\ell,\ell'\in H^0(K_C-D)$, $\ell\ne \ell'$.  Then also $q'\in H^0(K_C-D)\otimes W$ and:
	\[
	{\mu_{X/T}}(q\otimes \ell' - q'\otimes \ell) =\ell x_i\ell'|_T -\ell'x_i\ell|_T =0.
	\]
	Therefore $q$ is involved in a non-trivial element of $\ker({\mu_{X/T}})$.
	
	\medskip
	
	Let $q=\sum_{i=g}^{g+2} \ell_i x_i|_T\in H$ for some $\ell_i\notin H^0(K_C-D)$. Choose $0\ne \ell\in H^0(K_C-D)$ and consider $q_i=\ell x_i|_T$. Then 
	\[
	{\mu_{X/T}}\left(q\otimes \ell - \sum_{i=g}^{g+2} q_i\otimes \ell_i\right) = \left(\sum_{i=g}^{g+2} \ell_ix_i|_T\right)\ell - \sum_i(\ell x_i|_T)\ell_i =0
	\]
	and hence this element $q$ is involved in a non-trivial element of $\ker({\mu_{X/T}})$, too. The proof of Theorem \ref{T:syzX} is now complete.


	
	
	
	
	%
	%
	%
	

	\section{The bielliptic case}
	\label{sec:excepted}
	
	In this section, we give a partial answer in the bielliptic case that was excluded previously. We assume $g \ge 11$, and $C$ is bielliptic, $d=2g+2$ and $M$ is a tetragonal pencil on $C$. Note that $M$ is composed with the bielliptic involution.
	We prove the following.
	
	\begin{thm}
		Notation as above. If $C$ is embedded via $L=K_C+D$ with $D\in C_4$ general, then $\mathrm{Syz}_2(C)=C$.   
	\end{thm}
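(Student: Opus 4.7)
The plan is to apply the semi-continuity Theorem~\ref{thm:semicontinuity} to a family of embeddings of $C$ parameterized by $D\in C_4$, and to reduce the problem to a specific tractable case by specializing $D$ to the tetragonal pencil $|M|$.

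Consider the flat family $\mathcal{X}\to S$ with $S\subset C_4$ an irreducible open subset over which the graded Betti numbers of $\mathcal{X}_D=\varphi_{K_C+D}(C)\subset \bP^{g+2}$ are constant, chosen so that $S$ contains both a general $D$ and a specific $D_0$ lying in the tetragonal pencil $|M|$ (the pullback to $C$ of a base-point-free $g^1_2$ on the elliptic quotient $E$). Property $(N_1)$ holds throughout $C_4$ by Green--Lazarsfeld (Theorem~\ref{T:GL}), so such an $S$ exists after restricting to the stratum where the higher Betti numbers are also constant. By Theorem~\ref{thm:semicontinuity}, the locus $\{D\in S:\phi_D\text{ surjective}\}$ is open in $S$, hence dense as soon as it is non-empty. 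It therefore suffices to exhibit one $D_0\in S$ at which $\phi_{D_0}$ is surjective.

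Specialize to $D_0\in|M|$, so $h^0(D_0)=2$. Each $D'\in|M|$ spans its own 4-secant 2-plane $\Pi_{D'}\subset\bP^{g+2}$, and these 2-planes form a pencil sweeping out a 3-dimensional scroll $\Sigma\supset C$ with $\bP^2$-fibres over $\bP^1\cong|M|$; the curve $C$ sits on $\Sigma$ as the union $\bigcup_{D'\in|M|} D'$. Moreover, since $D_0=\pi^*D_E$ is invariant under the bielliptic involution $\iota$, the line bundle $L=K_C+D_0$, the scroll $\Sigma$, and all associated Koszul data are $\iota$-equivariant. Following the blueprint of Section~\ref{sec:4-secant}, replace the variety $X=C\cup\Pi$ used there with an analogous variety built from $C$ and the scroll $\Sigma$ (or from $C$ and one chosen fibre $\Pi_{D_0}$), and run the corresponding diagram chase: show that the scroll quadrics in $I_{\Sigma,2}$ are involved in linear syzygies of $C$ via a rolling-factors argument on $\Sigma$, and that the residual quadrics in $I_{C,2}\setminus I_{\Sigma,2}$ are linearly related to them using the surjectivity of the scroll's Koszul multiplication together with the $\iota$-equivariance.

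Once surjectivity of $\phi_{D_0}$ is established, semi-continuity yields surjectivity on a dense open subset of $C_4$, hence for general $D$, and Lemma~\ref{lem:Syz2(X)=X} concludes $\mathrm{Syz}_2(C)=C$. The main obstacle is proving surjectivity of $\phi_{D_0}$ at the $g^1_4$ specialization: Theorem~\ref{thm:ABS} is not available since $C$ is bielliptic, so the rolling-factors analysis on the scroll must be carried out directly in the bielliptic setting. The expectation is that the pencil structure ($h^0(D_0)=2$ rather than the generic $h^0(D)=1$), together with the $\iota$-action decomposing every Koszul space into $\pm 1$ eigenspaces, supplies enough extra linear syzygies to span all of $I_{C,2}$, thereby compensating for the failure of the non-bielliptic argument used in case (iii).
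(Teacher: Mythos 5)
Your reduction follows the same skeleton as the paper's: semi-continuity (Theorem \ref{thm:semicontinuity}) lets you specialize from a general $D$ to $D_0\in|M|$, and the relevant geometry at the specialization is the three-dimensional scroll swept out by the $4$-secant $2$-planes of the divisors of $|M|$. (Your bookkeeping for the semi-continuity step is also fine: $(N_1)$ holds on all of $C_4$ by Theorem \ref{T:GL}, and projective normality makes the relevant dimensions constant.) The problem is that the heart of the proof --- surjectivity of $\phi_{D_0}$ at this specialization --- is exactly the step you leave as an ``expectation,'' so the argument as written has a genuine gap. You correctly note that Theorem \ref{thm:ABS} is unavailable because $C$ is bielliptic, but the proposed substitutes (mimicking the $X=C\cup\Pi$ construction of Section \ref{sec:4-secant}, or extracting extra syzygies from the $\pm1$ eigenspaces of the bielliptic involution) are not what makes the specialization work, and there is no indication that the equivariance argument would produce the missing quadrics.

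What actually closes the gap is a splitting-type computation on the scroll $S$ associated to $M$. Writing $\cO_S(H)$ for the hyperplane class and $F$ for a fibre of $\pi:S\to\bP^1$, one has $\pi_*(\c I_{C/S}(2H))=\cO_{\bP^1}(a_1)\oplus\cO_{\bP^1}(a_2)$ with $a_1+a_2=g-1$. Since $\cO_S(H-F)$ restricts to $K_C$ on $C$ and realizes $S$ as the canonical scroll of $|M|$, Theorem 10 of \cite{ABS19} (the bielliptic \emph{canonical} computation) gives $\pi_*(\c I_{C/S}(2H-2F))=\cO_{\bP^1}\oplus\cO_{\bP^1}(g-5)$, whence $a_1=2$. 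This degree shift is the whole point: the degree-$0$ summand in the canonical model --- the quadric not involved in any linear syzygy, which is why $\Syz_2$ of a canonical bielliptic curve is the elliptic cone --- becomes a degree-$2$ summand after twisting by $2F$, and the rolling-factors trick (Lemma 5 of \cite{ABS19}) then exhibits every quadric of $H^0(\c I_{C/S}(2H))$ in a linear syzygy. Without this computation your plan does not establish surjectivity at $D_0$, and the semi-continuity step has nothing to propagate.
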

	
	\begin{proof}
		
		In view of Theorem \ref{thm:semicontinuity} it is enough to treat the special case in which $D \in |M|$.
		Let $S \subset \bP^{g+2}$ be the scroll associated to $M$; let $\cO_S(1)=\cO_S(H)$ and $F$ a fibre of the projection $\pi:S \to \bP^1$.
		Let $\c I_{C/S}$ be the ideal sheaf of $C$ in $S$. We have 
		$$ \pi_*(\c I_{C/S}(2H))=\cO_{\bP^1}(a_1)\oplus \cO_{\bP^1}(a_2)$$
		with $a_1 \le a_2 $ and $a_1+a_2=g-1$.
		
		The line bundle $\cO_S(H-F)$ restricts on $C$ to $K$ and it maps $S$ as the scroll associated to $|M|$ in the canonical model.
		From Theorem 10 in \cite{ABS19}, we have that 
		$$ \pi_*(\c I_{C/S}(2H-2F))=\cO_{\bP^1}\oplus \cO_{\bP^1}(g-5)$$
		so that $a_1=2$ and we conclude with the rolling factors trick, as in Lemma 5 of~\cite{ABS19}.
	\end{proof}

	Note that the proof went by specialization to the most degenerate case, when $D\in |M|$, and however, the statement holds generically.

\end{document}